\title{Typicality of normal numbers with respect to the Cantor series expansion} 
\author{Bill Mance}  
\address{
              Department of Mathematics \\
              The Ohio State University \\
              231 West 18th Avenue \\
              Columbus, OH 43210-1174
} 
\email{mance@math.ohio-state.edu}   % You may include the URL for your home page along with your email
\thanks{I would like to thank Christian Altomare and Vitaly Bergelson for many useful conversations.} % \thanks entries are to acknowledge grants. 
\keywords{Cantor series \and Normal numbers}
\subjclass{11K16 \ and 11A63}
\newcommand{\e}{\epsilon}
\newcommand{\la}{\lambda}
\newcommand{\qnk}{Q_n^{(k)}}
\newcommand{\formalsum}{\sum_{n=1}^{\infty} \frac {E_n} {q_1 q_2 \ldots q_n}}
\newcommand{\E}[1]{\hbox{E}\left[  #1  \right]}
\newcommand{\var}[1]{\hbox{Var}\left[  #1  \right]}
\newcommand{\Prob}[1]{\hbox{P}\left(  #1  \right)}
\newtheorem{theorem}{Theorem}[section]
\newtheorem{corollary}[theorem]{Corollary}
\newtheorem{lemma}[theorem]{Lemma}
\newtheorem{proposition}[theorem]{Proposition}
\theoremstyle{definition}
\newtheorem{definition}[theorem]{Definition}
\newcommand{\labeq}[1]{\label{eqn:#1}}
\newcommand{\refeq}[1]{(\ref{eqn:#1})}
\newcommand{\labt}[1]{\label{thm:#1}}
\newcommand{\reft}[1]{Theorem~\ref{thm:#1}}
\newcommand{\labl}[1]{\label{lemma:#1}}
\newcommand{\refl}[1]{Lemma~\ref{lemma:#1}}
\newcommand{\labp}[1]{\label{proposition:#1}}
\newcommand{\refp}[1]{Proposition~\ref{proposition:#1}}
\newcommand{\labd}[1]{\label{definition:#1}}
\newcommand{\refd}[1]{Definition~\ref{definition:#1}}
\newcommand{\labc}[1]{\label{coro:#1}}
\newcommand{\refc}[1]{Corollary~\ref{coro:#1}}
\newcommand{\floor}[1]{\lfloor #1 \rfloor} 
\newcommand{\ceil}[1]{\lceil #1 \rceil}
\begin{document} 
 
\begin{abstract}  
Fix a sequence of integers $Q=\{q_n\}_{n=1}^\infty$ such that $q_n$ is greater than or equal to $2$ for all $n$.
In this paper, we improve upon results by J. Galambos and F. Schweiger showing that almost every (in the sense of Lebesgue measure) real number in $[0,1)$ is $Q$-normal with respect to the $Q$-Cantor series expansion for sequences $Q$ that satisfy a certain condition.  We also provide asymptotics describing the number of occurrences of blocks of digits in the $Q$-Cantor series expansion of a typical number.  The notion of strong $Q$-normality, that satisfies a similar typicality result, is introduced.  Both of these notions are equivalent for the $b$-ary expansion, but strong normality is stronger than normality for the Cantor series expansion.  In order to show this, we provide an explicit construction of a sequence $Q$ and a real number that is $Q$-normal, but not strongly $Q$-normal.  We use the results in this paper to show that under a mild condition on the sequence $Q$, a set satisfying a weaker notion of normality, studied by A. R\'enyi in \cite{Renyi}, will be dense in $[0,1)$.
\end{abstract} 
\maketitle
\tableofcontents

%%%% **** The text of the paper starts here **** %%%%

\section{Introduction}
\label{sec:1}

\begin{definition}\labd{1.1} Let $b$ and $k$ be positive integers.  A {\it block of length $k$ in base $b$} is an ordered $k$-tuple of integers in $\{0,1,\ldots,b-1\}$.  A {\it block of length $k$} is a block of length $k$ in some base $b$.  A {\it block} is a block of length $k$ in base $b$ for some integers $k$ and $b$.
\end{definition}

\begin{definition}\labd{1.2} Given an integer $b \geq 2$, the {\it $b$-ary expansion} of a real $x$ in $[0,1)$ is the (unique) expansion of the form
$$
x=\sum_{n=1}^{\infty} \frac {E_n} {b^n}=0.E_1 E_2 E_3 \ldots
$$
such that $E_n$ is in $\{0,1,\ldots,b-1\}$ for all $n$ with $E_n \neq b-1$ infinitely often.
\end{definition}

Denote by $N_n^b(B,x)$ the number of times a block $B$ occurs with its starting position no greater than $n$ in the $b$-ary expansion of $x$.

\begin{definition}\labd{1.3} A real number $x$ in $[0,1)$ is {\it normal in base $b$} if for all $k$ and blocks $B$ in base $b$ of length $k$, one has
\begin{equation} \label{eqn:bnormal1}
\lim_{n \rightarrow \infty} \frac {N_n^{b}(B,x)} {n}=b^{-k}.
\end{equation}
A number $x$ is {\it simply normal in base $b$} if \refeq{bnormal1} holds for $k=1$.
\end{definition}

 Borel introduced normal numbers in 1909 and proved that almost all (in the sense of Lebesgue measure) real numbers in $[0,1)$ are normal in all bases.  The best known example of a number that is normal in base $10$ is due to Champernowne \cite{Champernowne}.  The number
$$
H_{10}=0.1 \ 2 \ 3 \ 4 \ 5 \ 6 \ 7 \ 8 \ 9 \ 10  \ 11 \ 12 \ldots ,
$$
formed by concatenating the digits of every natural number written in increasing order in base $10$, is normal in base $10$.  Any $H_b$, formed similarly to $H_{10}$ but in base $b$, is known to be normal in base $b$. Since then, many examples have been given of numbers that are normal in at least one base.  One  can find a more thorough literature review in \cite{DT} and \cite{KuN}.

The $Q$-Cantor series expansion, first studied by Georg Cantor in \cite{Cantor}, is a natural generalization of the $b$-ary expansion.

\begin{definition}\labd{1.4} $Q=\{q_n\}_{n=1}^{\infty}$ is a {\it basic sequence} if each $q_n$ is an integer greater than or equal to $2$.
\end{definition}

\begin{definition}\labd{1.5} Given a basic sequence $Q$, the {\it $Q$-Cantor series expansion} of a real $x$ in $[0,1)$ is the (unique) expansion of the form
\begin{equation} \label{eqn:cseries}
x=\sum_{n=1}^{\infty} \frac {E_n} {q_1 q_2 \ldots q_n}
\end{equation}
such that $E_n$ is in $\{0,1,\ldots,q_n-1\}$ for all $n$
with $E_n \neq q_n-1$ infinitely often.  We abbreviate \refeq{cseries} with the notation $x=0.E_1E_2E_3\ldots$ with respect to  $Q$.
\end{definition}

Clearly, the $b$-ary expansion is a special case of \refeq{cseries} where $q_n=b$ for all $n$.  If one thinks of a $b$-ary expansion as representing an outcome of repeatedly rolling a fair $b$-sided die, then a $Q$-Cantor series expansion may be thought of as representing an outcome of rolling a fair $q_1$ sided die, followed by a fair $q_2$ sided die and so on.  For example, if $q_n=n+1$ for all $n$, then the $Q$-Cantor series expansion of $e-2$ is
$$
e-2=\frac{1} {2}+\frac{1} {2 \cdot 3}+\frac{1} {2 \cdot 3 \cdot 4}+\ldots
$$
If $q_n=10$ for all $n$, then the $Q$-Cantor series expansion for $1/4$ is
$$
\frac {1} {4}=\frac{2} {10}+\frac {5} {10^2}+\frac {0} {10^3}+\frac {0} {10^4}+\ldots
$$

For a given basic sequence $Q$, let $N_n^Q(B,x)$ denote the number of times a block $B$ occurs starting at a position no greater than $n$ in the $Q$-Cantor series expansion of $x$. Additionally, define
$$
Q_n^{(k)}=\sum_{j=1}^n \frac {1} {q_j q_{j+1} \ldots q_{j+k-1}}.
$$

A. R\'enyi \cite{Renyi} defined a real number $x$ to be normal with respect to $Q$ if for all blocks $B$ of length $1$,
\begin{equation}\label{eqn:rnormal}
\lim_{n \rightarrow \infty} \frac {N_n^Q (B,x)} {Q_n^{(1)}}=1.
\end{equation}
If $q_n=b$ for all $n$, then \refeq{rnormal} is equivalent to {\it simple normality in base $b$}, but not equivalent to {\it normality in base $b$}.  Thus, we want to generalize normality in a way that is equivalent to normality in base $b$ when all $q_n=b$.

\begin{definition}\labd{1.7} A real number $x$ is {\it $Q$-normal of order $k$} if for all blocks $B$ of length $k$,
$$
\lim_{n \rightarrow \infty} \frac {N_n^Q (B,x)} {Q_n^{(k)}}=1.
$$
We say that $x$ is {\it $Q$-normal} if it is $Q$-normal of order $k$ for all $k$. A real number $x$ is {\it $Q$-ratio normal of order $k$} if for all  blocks $B$ and $B'$ of length $k$, we have
$$\lim_{n \to \infty} \frac {N_n^Q(B,x)} {N_n^Q(B',x)}=1.
$$
$x$ is {\it $Q$-ratio normal} if it is $Q$-ratio normal of order $k$ for all positive integers $k$.
\end{definition}

We make the following definitions:

\begin{definition}\labd{1.8} A basic sequence $Q$ is {\it $k$-divergent} if
$\lim_{n \rightarrow \infty} Q_n^{(k)}=\infty$.
$Q$ is {\it fully divergent} if $Q$ is $k$-divergent for all $k$.  $Q$ is {\it $k$-convergent} if it is not $k$-divergent.
\end{definition}

\begin{definition}\labd{1.6} A basic sequence $Q$ is {\it infinite in limit} if $q_n \rightarrow \infty$.
\end{definition}

For $Q$ that are infinite in limit,
it has been shown that the set of all $x$ in $[0,1)$ that are $Q$-normal of order $k$ has full Lebesgue measure if and only if $Q$ is $k$-divergent \cite{Renyi}.  Therefore, if $Q$ is infinite in limit, then the set of all $x$ in $[0,1)$ that are $Q$-normal has full Lebesgue measure if and only if $Q$ is fully divergent.  Suppose that $Q$ is $1$-divergent. Given an arbitrary non-negative integer $a$, F. Schweiger \cite{Sch} proved that for almost every $x$ with $\epsilon > 0$, one has
$$
N_n( (a), x ) = Q_n^{(1)} + O\left( \sqrt{Q_n^{(1)}} \cdot  \log^{3/2+\epsilon} Q_n^{(1)}     \right).
$$
J. Galambos proved an even stronger result in \cite{Galambos}.  He showed that for almost every $x$ in $[0,1)$ and for all non-negative integers $a$,
%\begin{equation}\labeq{galambos}
$$
N_n^Q((a),x)=Q_n^{(1)}+O\left(\sqrt{Q_n^{(1)}} \left(\log \log Q_n^{(1)} \right)^{1/2} \right).
$$
%\end{equation}

We provide the following main results:

\begin{enumerate}
\item A notion of strong $Q$-normality is provided and we construct an explicit example of a basic sequence $Q$ and a real number that is $Q$-normal, but not strongly $Q$-normal (\reft{construction}).
\item (\reft{typicalasymptotics}) If $Q$ is a basic sequence that is infinite in limit and $B$ is a block of length $k$, then for almost every real number $x$ in $[0,1)$, we have
$$
N_n^Q(B,x)=\qnk+O\left(\sqrt{\qnk} \left(\log \log \qnk \right)^{1/2} \right).
$$
\item If $Q$ is infinite in limit, then almost every real number is $Q$-normal of order $k$ if and only if $Q$ is $k$-divergent (\reft{aeQnok}).
\item If $Q$ is $k$-convergent for some $k$, then the set of numbers that are $Q$-normal is empty (\refp{fewnormal2}).  If $Q$ is infinite in limit, then the set of $Q$-ratio normal numbers is dense in $[0,1)$ (\refc{rationormalisdense}).
\end{enumerate}

\section{Strongly Normal Numbers}
\label{sec:2}
\subsection{Basic definitions and results}
In this section, we will introduce a notion of normality that is stronger than $Q$-normality.  This notion of normality will arise naturally later in this paper and will be useful for studying the typicality of $Q$-normal numbers.  We will first need to make definitions similar to those of $N_n^Q(B,x)$ and $\qnk$.

Given a real number $x \in [0,1)$, a basic sequence $Q$, a block $B$ of length $k$, a positive integer $p \in [1,k]$, and a positive integer $n$, we will denote by $N_{n,p}^Q(B,x)$ the number of times that the block $B$ occurs in the $Q$-Cantor series expansion of $x$ with starting position of the form $j \cdot k+p$ for $0 \leq j <  \frac {n} {k}$.

If  $n$ and $k$ are positive integers, define
$$
\rho(n,k)=\ceil{n/k}-1=\max \left\{i \in \mathbb{Z} : i < \frac {n} {k} \right\}.
$$

Suppose that $Q$ is a basic sequence and that $n,p$, and $k$  are positive integers with $p \in [1,k]$.  We will write
$$
Q_{n,p}^{(k)}=\sum_{j=0}^{\rho(n,k)} \frac {1} {q_{jk+p}q_{jk+p+1} \cdots q_{jk+p+k-1}}.
$$

\begin{definition}
Let $k$ be a positive integer.  Then a basic sequence $Q$ is {\it strongly $k$-divergent}
\footnote{It is not true that $k$-divergent basic sequences must be strongly $k$-divergent. The following example of a $2$-divergent basic sequence that is not strongly $2$-divergent was suggested by C. Altomare (verbal communication):  let the basic sequence $Q=\{q_n\}$ be given by
$$
q_n=\left\{ \begin{array}{ll}
\max(2,\floor{n^{1/4}}) & \textrm{if $n \equiv 0 \pmod{4}$}\\
\max(2,\floor{n^{1/4} \cdot \log^2 n}) & \textrm{if $n \equiv 1 \pmod{4}$}\\
\max(2,\floor{n^{3/4}}) & \textrm{if $n \equiv 2 \pmod{4}$}\\
\max(2,\floor{n^{3/4} \cdot \log^2 n}) & \textrm{if $n \equiv 3 \pmod{4}$}
\end{array} \right. .
$$}
 if for all positive integers $p$ with $p \in [1,k]$, we have $\lim_{n \to \infty} Q_{n,p}^{(k)}= \infty$.
A basic sequence $Q$ is {\it strongly fully divergent} if it is strongly $k$-divergent for all $k$.
\end{definition}

Given a real number $x \in [0,1)$, a basic sequence $Q$, a block $B$ of length $k$, a positive integer $p \in [1,k]$, and a positive integer $n$, we will denote by $N_{n,p}^Q(B,x)$ the number of times the block $B$ occurs in the $Q$-Cantor series expansion of $x$ with positions of the form $j \cdot k+p$ for $0 \leq j <  \frac {n} {k}$.

\begin{definition}
Suppose that $Q$ is a basic sequence.  A real number $x$ in $[0,1)$ is {\it strongly $Q$-normal of order $k$} if for all blocks $B$ of length $m \leq k$ and all $p \in [1,m]$, we have
$$
\lim_{n \rightarrow \infty} \frac {N_{n,p}^Q(B,x)} {Q_{n,p}^{(m)}}=1.
$$
A real number $x$ is {\it strongly $Q$-normal}
if it is strongly $Q$ normal of order $k$ for all $k$.
\end{definition}

We will use the following lemmas frequently and without mention:

\begin{lemma}
Given a real number $x \in [0,1)$, a basic sequence $Q$, a block $B$ of length $k$, a positive integer $p \in [1,k]$, and a positive integer $n$, we have
$$
N_{n,1}^Q(B,x)+N_{n,2}^Q(B,x)+\ldots+N_{n,k}^Q(B,x)=N_n^Q(B,x)+O(1) \hbox{ and}
$$
$$
Q_{n,1}^{(k)}+Q_{n,2}^{(k)}+\ldots+Q_{n,k}^{(k)}=\qnk+O(1).
$$
\end{lemma}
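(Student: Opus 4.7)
The plan is to observe that both identities follow from a single combinatorial partition of the starting positions. For any integer $m \geq 1$, there is a unique pair $(j, p)$ with $p \in \{1, \ldots, k\}$ and $j \geq 0$ such that $m = jk + p$, obtained by taking $p - 1 \equiv m - 1 \pmod{k}$. Hence the sets $\{jk + p : 0 \leq j \leq \rho(n,k)\}$ for $p = 1, \ldots, k$ are pairwise disjoint, and their union $S$ is exactly $\{1, 2, \ldots, (\rho(n,k) + 1) k\}$.

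For the first identity, I would compare $S$ with $\{1, 2, \ldots, n\}$, the set of starting positions counted by $N_n^Q(B, x)$. Writing $n = qk + r$ with $0 \leq r < k$ and using $\rho(n,k) = \ceil{n/k} - 1$, one has $(\rho(n,k)+1)k = qk = n$ when $r = 0$, so $S = \{1, \ldots, n\}$ and the identity is exact; when $0 < r < k$, one has $(\rho(n,k)+1)k = (q+1)k$, so $S$ contains exactly $k - r$ extra positions beyond $n$. Summing $N_{n,p}^Q(B,x)$ over $p$ therefore counts each block occurrence starting in $\{1, \ldots, n\}$ exactly once, plus at most $k - 1$ extra occurrences at positions in $\{n+1, \ldots, n+k-1\}$. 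Since $k$ is fixed, this excess is $O(1)$.

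The second identity follows from the same partition: $\sum_{p=1}^{k} Q_{n,p}^{(k)}$ and $\qnk$ consist of the terms $1/(q_m q_{m+1} \cdots q_{m+k-1})$ indexed by $m \in S$ and $m \in \{1, \ldots, n\}$ respectively, and each such term is bounded by $2^{-k}$ since $q_i \geq 2$. The difference is therefore at most $(k-1) \cdot 2^{-k}$, a constant, and hence $O(1)$. There is no real obstacle here; the lemma is essentially a bookkeeping statement that partitioning the starting positions into residue classes mod $k$ matches the sequential enumeration up to boundary effects of size at most $k - 1$.
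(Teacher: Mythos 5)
Your proof is correct and is essentially the argument the paper has in mind: the paper simply asserts the lemma "follows directly from the definitions," and your partition of starting positions into residue classes mod $k$, with the boundary excess of at most $k-1$ positions, is the bookkeeping that assertion is suppressing. No gaps.
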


\begin{proof}
This follows directly from the definitions of $N_n^Q(B,x)$ and $Q_{n}^{(k)}$.
\end{proof}

\begin{lemma}\labl{sumlittleo}
If $g_1,g_2,\ldots,g_n$ are non-negative functions on the natural numbers, then
$$
o(g_1)+o(g_2)+\ldots+o(g_n)=o(g_1+g_2+\ldots+g_n).
$$
\end{lemma}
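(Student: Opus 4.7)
The plan is straightforward: unpack the definition of little-$o$ and use the fact that the sum has only finitely many terms. Let $f_1,\ldots,f_n$ be any functions on the natural numbers with $f_i = o(g_i)$ for each $i$; I want to show $f_1 + \cdots + f_n = o(g_1 + \cdots + g_n)$.

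First I would fix $\epsilon > 0$. By the definition of little-$o$, for each $i \in \{1,\ldots,n\}$ there is an integer $M_i$ such that $|f_i(m)| \leq \epsilon\, g_i(m)$ for all $m \geq M_i$ (this uses that $g_i$ is non-negative, so the inequality is meaningful as stated; on any $m$ where $g_i(m)=0$, the condition $f_i = o(g_i)$ forces $f_i(m)=0$ for large $m$, so the bound still holds). Then I would set $M = \max(M_1,\ldots,M_n)$, which is finite since $n$ is finite, and for $m \geq M$ apply the triangle inequality to get
$$
|f_1(m) + \cdots + f_n(m)| \leq |f_1(m)| + \cdots + |f_n(m)| \leq \epsilon\bigl(g_1(m) + \cdots + g_n(m)\bigr).
$$
Since $\epsilon$ was arbitrary, this gives $f_1 + \cdots + f_n = o(g_1 + \cdots + g_n)$, as required.

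There is no real obstacle here; the only point worth flagging is that the argument relies crucially on $n$ being a fixed finite number so that $\max(M_1,\ldots,M_n)$ exists, and on the $g_i$ being non-negative so that summing the bounds $\epsilon g_i$ does not produce cancellations. Both hypotheses are in the statement, so the proof reduces to the two-line computation above.
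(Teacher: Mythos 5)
Your proof is correct: the standard $\epsilon$--$M_i$ argument with $M=\max(M_1,\ldots,M_n)$ and the triangle inequality is exactly what is needed, and you correctly flag the two hypotheses (finiteness of $n$, non-negativity of the $g_i$) that make it work. The paper itself states this lemma without proof, evidently regarding it as routine, so your write-up simply supplies the omitted standard argument.
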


%\begin{proof}
%Suppose that $f_i=o(g_i)$ for $i \in [1,n]$, so
%\begin{equation}
%\lim_{m \to \infty} \frac {f_i(m)} {g_i(m)}=0.
%\end{equation}
%We let
%\begin{equation}
%g(m)=\max_{1 \leq i \leq n} g_i(m).
%\end{equation}
%Thus,
%\begin{equation}
%\lim_{m \to \infty} \frac {f_i(m)} {g(m)}=0,
%\end{equation}
%so
%\begin{equation}
%\lim_{m \to \infty} \frac {\sum_{i=1}^n f_i(m)} {g(m)}=0.
%\end{equation}
%However,
%\begin{equation}
%g(m) \leq \sum_{i=1}^n g_i(m),
%\end{equation}
%so
%\begin{equation}
%\lim_{m \to \infty} \frac {\sum_{i=1}^n f_i(m)} {\sum_{i=1}^n g_i(m)}=0.
%\end{equation}
%\end{proof}

\begin{theorem}\labt{SNisN}
If $Q$ is a basic sequence and $x$ is strongly $Q$-normal of order $k$, then $x$ is $Q$-normal of order $k$.
\end{theorem}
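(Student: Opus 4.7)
The plan is a direct computation: decompose $N_n^Q(B,x)$ and $Q_n^{(k)}$ by the residue class modulo $k$ of the starting position, apply strong $Q$-normality within each residue class, and add everything up. There is no deep obstacle; the work consists of unwinding notation and invoking the two preceding lemmas.

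I would begin by fixing a block $B$ of length $k$. The first of the two preceding lemmas gives
$$
N_n^Q(B,x) = \sum_{p=1}^k N_{n,p}^Q(B,x) + O(1), \qquad Q_n^{(k)} = \sum_{p=1}^k Q_{n,p}^{(k)} + O(1).
$$
Since $x$ is strongly $Q$-normal of order $k$, taking $m=k$ in the definition yields, for each $p \in [1,k]$,
$$
N_{n,p}^Q(B,x) = Q_{n,p}^{(k)} + o\bigl( Q_{n,p}^{(k)} \bigr).
$$
Summing these $k$ asymptotic equalities over $p$ and invoking \refl{sumlittleo} to merge the $o$-terms produces
$$
N_n^Q(B,x) - Q_n^{(k)} = o\bigl( Q_n^{(k)} \bigr) + O(1).
$$
Dividing through by $Q_n^{(k)}$ gives $N_n^Q(B,x)/Q_n^{(k)} \to 1$, which is exactly $Q$-normality of order $k$ for the block $B$.

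The only point requiring a brief remark is that $Q_n^{(k)} \to \infty$, so that the $O(1)$ term is absorbed in the limit. This is forced by the hypothesis: if some $Q_{n,p}^{(k)}$ remained bounded, then the conditions $N_{n,p}^Q(B,x)/Q_{n,p}^{(k)} \to 1$ imposed on all blocks $B$ of length $k$ would compel each $N_{n,p}^Q(B,x)$ to be bounded as well, so only finitely many distinct blocks could ever appear in positions of the form $jk+p$, and the total $\sum_B N_{n,p}^Q(B,x)$—which equals the number of such positions up to $n$—would remain bounded, contradicting its unboundedness as $n \to \infty$. Hence $Q_n^{(k)} \to \infty$ and the chain of estimates above closes the proof.
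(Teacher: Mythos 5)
Your core argument is the paper's own proof: split $N_n^Q(B,x)$ and $Q_n^{(k)}$ over the $k$ residue classes of starting positions, apply strong normality with $m=k$ in each class, and merge the error terms via \refl{sumlittleo}. Up to the display $N_n^Q(B,x)-Q_n^{(k)}=o\bigl(Q_n^{(k)}\bigr)+O(1)$ everything is correct, and you are in fact more scrupulous than the paper, which treats $N_n^Q(B,x)=\sum_{p=1}^k N_{n,p}^Q(B,x)$ as an exact identity and never confronts the $O(1)$.

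The flaw is in your closing paragraph. The implication you use --- each $N_{n,p}^Q(B,x)$ bounded, hence only finitely many distinct blocks appear at positions $\equiv p \pmod{k}$, hence $\sum_B N_{n,p}^Q(B,x)$ is bounded --- runs backwards: if every block occurs only boundedly often while there are infinitely many available positions, then infinitely many \emph{distinct} blocks must occur, and a sum of bounded counts over infinitely many blocks $B$ is not bounded; it equals the number of positions, exactly as it should, with no contradiction. Moreover the claim itself is false: take $q_j=2^j$ and $E_j=j-1$. Then $Q_{n,1}^{(1)}=\sum_{j=1}^n 2^{-j}\to 1$, every digit value occurs exactly once, and $N_{n,1}^Q((b),x)/Q_{n,1}^{(1)}\to 1$ for every $b$, so strong normality of order $1$ coexists with bounded $Q_{n,p}^{(k)}$. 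The repair is to treat the case $\lim_n Q_n^{(k)}<\infty$ separately: there each $N_{n,p}^Q(B,x)$ is a nondecreasing integer sequence whose ratio to the convergent sequence $Q_{n,p}^{(k)}$ tends to $1$, hence it is eventually constant with limit $\lim_n Q_{n,p}^{(k)}$; consequently $N_n^Q(B,x)$ and $Q_n^{(k)}$ both converge to the common positive value $\sum_{p=1}^k \lim_n Q_{n,p}^{(k)}$ (the boundary discrepancies $0\le \sum_p N_{n,p}^Q(B,x)-N_n^Q(B,x)\le k-1$ and $\sum_p Q_{n,p}^{(k)}-Q_n^{(k)}$ both tend to $0$ once the relevant sequences converge), and the ratio still tends to $1$.
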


\begin{proof}
Let $m \leq k$ be a positive integer and let $B$ be a block of length $k$.  Since $x$ is strongly $Q$-normal of $k$, we know that for all $p \in [1,m]$, $N_{n,p}^Q(B,x)=Q_{n,p}^{(k)}+o\left(Q_{n,p}^{(k)}\right)$. Thus, we see that
$$
N_n^Q(B,x)=\sum_{p=1}^m N_{n,p}^Q(B,x)=\sum_{p=1}^m \left( Q_{n,p}^{(k)}+o\left(Q_{n,p}^{(k)}\right) \right)
$$
$$
=\sum_{p=1}^m Q_{n,p}^{(k)}+o\left( \sum_{p=1}^m Q_{n,p}^{(k)} \right)=\qnk+o\left(\qnk \right),
$$
so $\lim_{n \to \infty} \frac {N_n^Q(B,x)} {\qnk}=1$. Therefore, $x$ is $Q$-normal of order $k$.
\end{proof}

\begin{corollary}\labc{snisn}
Suppose that $Q$ is a basic sequence.  If $x$ is strongly $Q$-normal, then $x$ is $Q$-normal.
\end{corollary}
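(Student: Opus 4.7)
The plan is to derive Corollary \ref{coro:snisn} as an immediate consequence of Theorem \ref{thm:SNisN}, unpacking the quantifiers in the definitions of strong $Q$-normality and $Q$-normality. Recall that by Definition \ref{definition:1.7}, $x$ is $Q$-normal precisely when $x$ is $Q$-normal of order $k$ for every positive integer $k$, and analogously $x$ is strongly $Q$-normal exactly when $x$ is strongly $Q$-normal of order $k$ for every $k$.

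First I would fix an arbitrary positive integer $k$. Since $x$ is strongly $Q$-normal by hypothesis, it is in particular strongly $Q$-normal of order $k$. Theorem \ref{thm:SNisN} then applies and yields that $x$ is $Q$-normal of order $k$. Because $k$ was arbitrary, $x$ is $Q$-normal of order $k$ for every $k$, which by definition means $x$ is $Q$-normal.

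There is essentially no obstacle here: the corollary is a pure quantifier-chase over the already-established Theorem \ref{thm:SNisN}. The only thing to be careful about is to cite the correct definitions so that the reader sees that the universally quantified statement ``for all $k$'' in the strong case transfers cleanly to the universally quantified statement ``for all $k$'' in the ordinary case, with no loss of uniformity required (the theorem is applied separately for each $k$, which is all that is needed).
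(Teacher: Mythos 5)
Your proof is correct and matches the paper's (implicit) reasoning: the corollary is stated immediately after Theorem~\ref{thm:SNisN} precisely because it follows by applying that theorem for each order $k$ separately, which is exactly the quantifier-chase you carry out. No issues.
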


\subsection{Construction of a number that is $Q$-normal, but not strongly $Q$-normal of order $2$}

In this subsection, we will work towards giving an example of a basic sequence $Q$ and a real number $x$ that is $Q$-normal, but not strongly $Q$-normal of order $2$.  We will use the conventions found in \cite{Mance}.

Given a block $B$, $|B|$ will represent the length of $B$. Given non-negative integers $l_1,l_2,\ldots,l_n$, at least one of which is positive, and blocks $B_1,B_2,\ldots,B_n$, the block $B=l_1B_1 l_2B_2 \ldots l_n B_n$
will be the block of length $l_1 |B_1|+\ldots+l_n |B_n|$ formed by concatenating $l_1$ copies of $B_1$, $l_2$ copies of $B_2$, through $l_n$ copies of $B_n$.  For example, if $B_1=(2,3,5)$ and $B_2=(0,8)$, then $2B_1 1B_20B_2=(2,3,5,2,3,5,0,8)$.  We will need the following definitions:

\begin{definition}\labd{1.9} A {\it weighting} $\mu$ is a collection of functions $\mu^{(1)},\mu^{(2)},\mu^{(3)},\ldots$ with $\sum_{j=0}^{\infty} \mu^{(1)}(j)=1$ such that for all $k$, $\mu^{(k)}:\{0,1,2,\ldots\}^k \rightarrow [0,1]$ and $\mu^{(k)}(b_1,b_2,\ldots,b_k)=\sum_{j=0}^{\infty} \mu^{(k+1)}(b_1,b_2,\ldots,b_k,j)$.
\end{definition}

\begin{definition}\labd{1.10} The {\it uniform weighting in base $b$} is the collection $\lambda_b$ of functions  $\lambda_b^{(1)},\lambda_b^{(2)},\lambda_b^{(3)},\ldots$ such that for all $k$ and blocks $B$ of length $k$ in base $b$
\begin{equation}
\lambda_b^{(k)}(B)=b^{-k}.
\end{equation}
\end{definition}

\begin{definition}\labd{1.11} Let $p$ and $b$ be positive integers such that $1 \leq p \leq b$.  A weighting $\mu$ is {\it $(p,b)$-uniform} if for all $k$ and blocks $B$ of length $k$ in base $p$, we have
\begin{equation}
\mu^{(k)}(B)=\lambda_b^{(k)}(B)=b^{-k}.
\end{equation}
\end{definition}

Given blocks $B$ and $y$, let $N(B,y)$ be the number of
occurrences of the block~$B$ in the block~$y$.

\begin{definition}\labd{1.12} Let $\epsilon$ be a real number such that $0 < \epsilon < 1$ and let $k$ be  a positive integer.
Assume that $\mu$ is a weighting.  A block of digits $y$ is {\it $(\epsilon,k,\mu)$-normal }\footnote{\refd{1.12} is a generalization of the concept of $(\epsilon,k)$-normality, originally due to Besicovitch \cite{Besicovitch}.} if for all blocks $B$ of length $m \leq k$, we have
\begin{equation}
\mu^{(m)}(B)|y|(1-\epsilon) \le N(B,y) \le \mu^{(m)}(B)|y|(1+\epsilon).
\end{equation}
\end{definition}

For the rest of this subsection, we use the following conventions.  Given sequences of non-negative integers $\{l_i\}_{i=1}^\infty$ and $\{b_i\}_{i=1}^\infty$ with each $b_i\ge 2$
and a sequence of blocks $\{x_i\}_{i=1}^\infty$, we set
\begin{equation}
L_i=|l_1 x_1 \ldots l_i x_i|=\sum_{j=1}^i  l_j |x_j|,
\end{equation}
\begin{equation}
q_n=b_i \textrm{\ for $L_{i-1} < n \leq L_i$},
\end{equation}
and
\begin{equation}
Q=\{q_n\}_{n=1}^{\infty}.
\end{equation}
Moreover, if $(E_1,E_2,\ldots)=l_1x_1 l_2 x_2 \ldots$, we set
\begin{equation}
x=\sum_{n=1}^{\infty} \frac {E_n} {q_1 q_2 \ldots q_n}.
\end{equation}

Given $\{q_n\}_{n=1}^{\infty}$ and $\{l_i\}_{i=1}^{\infty},$ it is
assumed that $x$ and $Q$ are given by the formulas above. 

\begin{definition}\labd{1.13} A {\it block friendly family} is a $6$-tuple $W=\{(l_i,b_i,p_i,\epsilon_i,k_i,\mu_i)\}_{i=1}^{\infty}$ with non-decreasing sequences of non-negative integers $\{l_i\}_{i=1}^{\infty}$, $\{b_i\}_{i=1}^{\infty}$, $\{p_i\}_{i=1}^{\infty}$ and $\{k_i\}_{i=1}^{\infty}$, for which $b_i \geq 2$, $b_i \rightarrow \infty$ and $p_i \rightarrow \infty$, such that $\{\mu_i\}_{i=1}^{\infty}$ is a sequence of $(p_i,b_i)$-uniform weightings and $\{\epsilon_i\}_{i=1}^{\infty}$ strictly decreases to $0$.
\end{definition}

\begin{definition}\labd{1.14} Let $W=\{(l_i,b_i,p_i,\epsilon_i,k_i,\mu_i)\}_{i=1}^{\infty}$ be a block friendly family.  If $\lim k_i=K<\infty$, then let $R(W)=\{0,1,2,\ldots,K\}$. Otherwise, let $R(W)=\{0,1,2,\ldots\}$.
A sequence $\{x_i\}_{i=1}^\infty$ of $(\e_i,k_i,\mu_i)$-normal blocks of
non-decreasing length
is said to be {\it $W$-good} if for all $k$ in $R$,
the following three conditions hold:
\begin{equation}\label{eqn:good1}
\frac {b_i^k} {\epsilon_{i-1}-\epsilon_i}=o(|x_i|);
\end{equation}
\begin{equation}\label{eqn:good2}
\frac {l_{i-1}} {l_i} \cdot \frac {|x_{i-1}|} {|x_i|}=o(i^{-1}b_i^{-k});
\end{equation}
\begin{equation}\label{eqn:good3}
\frac {1} {l_i} \cdot \frac {|x_{i+1}|} {|x_i|}=o(b_i^{-k}).
\end{equation}
\end{definition}

We now state a key theorem of \cite{Mance}.

\begin{theorem}\labt{oldmain} Let $W$ be a block friendly family and $\{x_i\}_{i=1}^{\infty}$ a $W$-good sequence. If~$k~\in~R(W)$, then $x$ is $Q$-normal of order $k$.  If $k_i \rightarrow \infty$, then $x$ is $Q$-normal.
\end{theorem}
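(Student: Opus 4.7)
The plan is to decompose both $N_n^Q(B,x)$ and $\qnk$ according to the levels of the construction, where \emph{level $j$} denotes the range $L_{j-1} < m \leq L_j$ on which $q_m = b_j$, and to show their ratio tends to $1$ using the three $W$-goodness conditions. Fix $k \in R(W)$ and a block $B = (B_1, \dots, B_k)$, and set $d = \max_j B_j$. Since $p_i \to \infty$ and, for $k \in R(W)$, $k_i \geq k$ for all sufficiently large $i$, choose $i_0$ so that for all $i \geq i_0$ both $p_i > d$ and $k_i \geq k$. Then $B$ is a block in base $p_i$, so $(p_i,b_i)$-uniformity of $\mu_i$ gives $\mu_i^{(k)}(B) = b_i^{-k}$, and $x_i$ is $(\epsilon_i, k, \mu_i)$-normal.

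For $n$ with $L_{i-1} < n \leq L_i$, write $n - L_{i-1} = s|x_i| + r$ with $0 \leq s < l_i$ and $0 \leq r < |x_i|$. Since $q_m$ is constant within each level, direct counting yields
$$
\qnk = \sum_{j<i} l_j |x_j| b_j^{-k} + (s|x_i| + r)\, b_i^{-k} + O(ki),
$$
the error absorbing the $O(k)$ windows straddling each of the $i$ level boundaries. Similarly,
$$
N_n^Q(B,x) = \sum_{j<i} l_j\, N(B, x_j) + s\, N(B, x_i) + N\!\bigl(B, (x_i)_{1,\dots,r}\bigr) + E_n,
$$
where $|E_n|$ counts $B$-occurrences whose window straddles adjacent copies of a single $x_j$ or crosses a level boundary. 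Within level $j$ there are at most $(k-1)(l_j-1) = O(k l_j)$ such intra-level straddles, and condition \refeq{good1} forces $k b_j^k/|x_j| \to 0$, so $O(k l_j) = o(l_j |x_j| b_j^{-k})$; by \refl{sumlittleo}, summed over all $j \leq i$ these boundary contributions are $o(\qnk)$.

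For $j \geq i_0$, $(\epsilon_j, k, \mu_j)$-normality of $x_j$ gives $\bigl|N(B, x_j) - b_j^{-k}|x_j|\bigr| \leq \epsilon_j b_j^{-k} |x_j|$, and the analogous bound holds for the partial count $N(B, (x_i)_{1,\dots,r})$ up to an additive $O(k)$. Assembling these estimates,
$$
\bigl|N_n^Q(B,x) - \qnk\bigr| \leq \sum_{j<i} \epsilon_j\, l_j |x_j| b_j^{-k} + \epsilon_i\,(s|x_i|+r)\, b_i^{-k} + o(\qnk).
$$
Since $\epsilon_j \to 0$ and $\sum_{j<i} l_j |x_j| b_j^{-k} \to \infty$ (a consequence of \refeq{good1}), a standard Ces\`aro-mean argument shows the first sum is $o\bigl(\sum_{j<i} l_j |x_j| b_j^{-k}\bigr) \leq o(\qnk)$. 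Dividing by $\qnk$ yields $N_n^Q(B,x)/\qnk \to 1$, establishing $Q$-normality of order $k$. When $k_i \to \infty$, $R(W) = \{0,1,2,\ldots\}$, so this argument applies to every $k$ and $x$ is $Q$-normal.

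The main obstacle is the transition region where $n$ is close to $L_{i-1}$ (so $s$ is small), in which case the partial mass $(s|x_i|+r)b_i^{-k}$ has not yet dominated the accumulated $\sum_{j<i} l_j |x_j| b_j^{-k}$, and one must rule out oscillation of the ratio as $s$ ranges from $0$ to $l_i$. Iterating \refeq{good2} gives $\sum_{j<i} l_j |x_j| b_j^{-k} = o(l_i |x_i| b_i^{-k})$, so once $s$ is even moderately large the current level dominates; condition \refeq{good3} applied at level $i-1$ controls the size of $|x_i|$ relative to $l_{i-1}|x_{i-1}|$, ensuring a smooth transition across $n = L_{i-1}$. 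Together these confine the ratio to $1 + o(1)$ uniformly in $s \in [0, l_i)$, completing the proof.
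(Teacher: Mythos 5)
A preliminary note: the paper itself does not prove \reft{oldmain} --- it is imported verbatim from \cite{Mance} as ``a key theorem'' of that paper --- so there is no in-paper proof to compare against. Your overall strategy (decompose $N_n^Q(B,x)$ and $\qnk$ level by level, use $(\epsilon_j,k,\mu_j)$-normality of the full copies of $x_j$ together with $(p_j,b_j)$-uniformity to get $\mu_j^{(k)}(B)=b_j^{-k}$, and charge the junction/boundary errors to \refeq{good1}) is the right one and reflects how the three goodness conditions were designed to be used.

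There is, however, a genuine gap at the partial-copy step. You assert that the normality bound ``holds for the partial count $N(B,(x_i)_{1,\dots,r})$ up to an additive $O(k)$,'' i.e.\ that $|N(B,(x_i)_{1,\dots,r})-rb_i^{-k}|\le \epsilon_i r b_i^{-k}+O(k)$. This is false: $(\epsilon,k,\mu)$-normality constrains only the count over the \emph{whole} block $x_i$, and a prefix of an $(\epsilon,k,\mu)$-normal block can be arbitrarily biased --- e.g.\ for $x_i=C_{b,w}$ the prefix of length $w$ may be the all-zero block, containing $w-k+1$ copies of $(0,\dots,0)$ against an expected $wb^{-k}$, and no additive $O(k)$ repairs a multiplicative error of order $b^{k}$. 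Consequently the displayed inequality in ``Assembling these estimates'' is unjustified, and the closing paragraph, where the repair would have to live, only asserts that \refeq{good2} and \refeq{good3} ``ensure a smooth transition'' without extracting a bound. The fix is available from exactly the hypotheses you name, but must actually be carried out: bound the partial copy crudely by $0\le N(B,(x_i)_{1,\dots,r})\le N(B,x_i)\le(1+\epsilon_i)|x_i|b_i^{-k}$, so its total contribution to $|N_n^Q(B,x)-\qnk|$ is $O(|x_i|b_i^{-k})=O(|x_i|)$; then \refeq{good3} applied at index $i-1$ gives $|x_i|=o\left(l_{i-1}|x_{i-1}|b_{i-1}^{-k}\right)=o(\qnk)$ for all $n>L_{i-1}$, which kills this term uniformly in $s$ and $r$ and removes any ``oscillation'' worry. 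Two smaller repairs are also needed: the finitely many levels $j<i_0$ (where $B$ need not be a block in base $p_j$ or $k>k_j$) contribute only $O(1)$ and should be set aside explicitly; and your appeals to \refl{sumlittleo} over a number of summands growing with $i$ are not covered by that lemma as stated --- they should be replaced by the Ces\`aro-mean argument you already invoke for the $\epsilon_j$-sum. With these substitutions the proof closes.
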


If $b$ and $w$ are positive integers where $b$ is greater than or equal to $2$ and $w \geq 3$ is odd, then we let $C_{b,w}$ be one of the blocks formed by concatenating all the blocks of length $w$ in base $b$ in such a way that there are at least twice as many copies of the block $(0)$ at odd positions as the block $(1)$.  For example, we could pick
$$
C_{2,3}=1(0,0,0)1(1,0,1)1(0,1,0)1(0,0,1)1(0,1,1)1(1,0,0)1(1,1,0)1(1,1,1)
$$
$$
=(0,0,0,1,0,1,0,1,0,0,0,1,0,1,1,1,0,0,1,1,0,1,1,1),
$$
which has $9$ copies of $(0)$ at the odd positions and $3$ copies of $(1)$ at the odd positions. Note that $|C_{b,w}|=wb^w$.
The next lemma is proven identically to Lemma 4.2 in \cite{Mance}:

\begin{lemma}\labl{l4.2}
If $K<w$ and $\epsilon=\frac {K} {w}$, then $C_{b,w}$ is $(\epsilon,K,\lambda_b)$-normal.
\end{lemma}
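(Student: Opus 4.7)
The approach will be to count the occurrences of the length-$m$ block $B$ (where $m \le K$) in $C_{b,w}$ by decomposing them into two classes: those whose starting position lies entirely within one of the $b^w$ consecutive length-$w$ blocks making up $C_{b,w}$, and those that straddle the boundary between two such consecutive blocks.

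First, I will compute the within-block contribution exactly. For each of the $w-m+1$ starting positions $p$ inside a length-$w$ block, exactly $b^{w-m}$ of the $b^w$ length-$w$ blocks have $B$ appearing at position $p$, since the remaining $w-m$ digits are free. Summing over $p$ and over all $b^w$ blocks appearing once in $C_{b,w}$, I will obtain exactly $(w-m+1)b^{w-m}$ within-block occurrences. Since $m \le K$, this already dominates $(w-K)b^{w-m} = \lambda_b^{(m)}(B) \cdot |C_{b,w}| \cdot (1-\epsilon)$, so the lower inequality in the definition of $(\epsilon, K, \lambda_b)$-normality will follow automatically.

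Second, I will bound the straddling contribution. An occurrence of $B$ straddling a boundary uses some $s \in \{1, \dots, m-1\}$ digits from the left block and $m-s$ digits from the right block, which requires the left block to end with $(B_1, \dots, B_s)$ and the right block to begin with $(B_{s+1}, \dots, B_m)$. Since each length-$w$ block appears exactly once in $C_{b,w}$, and is therefore a left-member of at most one consecutive pair and a right-member of at most one consecutive pair, the number of pairs contributing at a particular $s$ is at most $\min(b^{w-s}, b^{w-m+s})$. Summing this bound over $s$ will control the total straddling contribution.

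The main technical obstacle is the final step: verifying that the combined upper bound stays below $(w+K)b^{w-m} = \lambda_b^{(m)}(B) \cdot |C_{b,w}| \cdot (1+\epsilon)$, equivalently that the straddling count is at most $(K+m-1)b^{w-m}$. I will establish this by an elementary manipulation of $\sum_{s=1}^{m-1}\min(b^{w-s}, b^{w-m+s})$ together with the constraints $m \le K < w$, in exactly the same way as in the proof of Lemma 4.2 of \cite{Mance}, to which the present argument reduces.
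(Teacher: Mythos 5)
The paper does not actually prove this lemma; it only remarks that it ``is proven identically to Lemma 4.2 in \cite{Mance}''. Your lower bound is correct and is surely the same as the one intended there: the within-chunk count is exactly $(w-m+1)b^{w-m}\ge(w-K)b^{w-m}=(1-\epsilon)\lambda_b^{(m)}(B)\,|C_{b,w}|$, which gives the lower half of $(\epsilon,K,\lambda_b)$-normality with room to spare.

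The upper bound, however, has a genuine gap, and it sits exactly where you placed your ``main technical obstacle.'' The inequality you propose to verify,
$$
\sum_{s=1}^{m-1}\min\left(b^{w-s},\,b^{w-m+s}\right)\le (K+m-1)\,b^{w-m},
$$
is false except in degenerate cases: each summand equals $b^{\,w-\max(s,m-s)}\ge b^{\,w-m+1}$, so the left-hand side is at least $(m-1)b^{\,w-m+1}$ and the inequality would force $b\le 1+\frac{K}{m-1}$. Already for $m=2$ the single term is $b^{w-1}$, which exceeds $(K+1)b^{w-2}$ whenever $b>K+1$, and for $m$ near $K$ it would force $b\le 2$; in the application in \reft{construction} one has $b=2i$ with $m$ ranging up to $K=2i+1$, so your bound misses by a factor on the order of $b^{\lfloor m/2\rfloor}$. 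Moreover this is not merely a lossy estimate that a sharper elementary manipulation would repair: no bound valid for an \emph{arbitrary} admissible ordering of the $b^w$ blocks can close the argument, since one may order the blocks so that essentially every block ending in the digit $2$ is immediately followed by a block beginning with $3$ (while still meeting the condition on $(0)$'s and $(1)$'s at odd positions), producing on the order of $b^{w-1}$ straddling occurrences of $(2,3)$ and violating the required upper bound $(w+K)b^{w-2}$ once $b$ is large. Hence the boundary contribution must be controlled using the particular concatenation chosen for $C_{b,w}$, or by a different accounting of straddling occurrences; that is precisely the content of the proof in \cite{Mance}, so deferring this step back to \cite{Mance} is circular here, since the present lemma \emph{is} that lemma.
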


% We may now construct a basic sequence $Q$ and a real number $x$ that is $Q$-normal but not strongly $Q$-normal of order $2$.

\begin{theorem}\labt{construction}\footnote{\reft{oldmain} may be used to construct other explicit examples of $Q$-normal numbers that satisfy some unusual conditions.  Given a basic sequence $Q$, we say that $x$ is {\it $Q$-distribution normal} if the sequence $\{q_1q_2 \cdots q_n x\}_n$ is uniformly distributed mod $1$.  \cite{AlMa} uses \reft{oldmain} to give an example of a basic sequence $Q$ and a real number $x$ such that $x$ is $Q$-normal, but $q_1q_2 \cdots q_n x  \pmod{1} \to 0$, so $x$ is not $Q$-distribution normal.}
There exists a basic sequence $Q$ and a real number $x$ such that $x$ is $Q$-normal, but not strongly $Q$-normal of order $2$.
\end{theorem}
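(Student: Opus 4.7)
The plan is to invoke \reft{oldmain} with building blocks $x_i=C_{b_i,w_i}$, using the parity asymmetry built into $C_{b,w}$ to engineer a $Q$-normal number $x$ that nevertheless fails strong $Q$-normality of order~$2$.

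I would first fix a block-friendly family $W=\{(l_i,b_i,p_i,\epsilon_i,k_i,\mu_i)\}_{i=1}^{\infty}$ in which $b_i$ is a non-decreasing sequence of even integers with $b_i\to\infty$, $w_i\ge 3$ is odd with $w_i\to\infty$, $k_i\to\infty$ with $k_i<w_i$, $\epsilon_i=k_i/w_i$ decreases strictly to $0$, $p_i=b_i$, and $\mu_i=\lambda_{b_i}$ (trivially $(b_i,b_i)$-uniform). Setting $x_i=C_{b_i,w_i}$, \refl{l4.2} gives that $x_i$ is $(\epsilon_i,k_i,\lambda_{b_i})$-normal, and $|x_i|=w_ib_i^{w_i}$ is even because $b_i$ is even. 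I would then choose positive even integers $l_i$ inductively to enforce the three $W$-good inequalities \refeq{good1}--\refeq{good3} for every $k\in R(W)=\{0,1,2,\ldots\}$; this is routine since $|x_i|$ is super-exponential in $b_i,w_i$. By \reft{oldmain}, the resulting $x$ is $Q$-normal.

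The remaining task is to show $x$ is not strongly $Q$-normal of order~$2$. The key is a parity alignment: since each $l_i|x_i|$ is even, every $L_j=\sum_{i\le j}l_i|x_i|$ is even, so every one of the $l_i$ copies of $x_i$ inside $l_1x_1l_2x_2\cdots$ starts at an odd absolute position. Odd positions within any copy of $x_i$ therefore coincide with odd absolute positions of $x$. Because $w_i$ is odd, the defining property of $C_{b_i,w_i}$ then says that within the contribution of $l_ix_i$ to $x$, the digit $0$ occurs at odd absolute positions at least twice as often as the digit $1$ occurs at odd absolute positions. Condition \refeq{good2} forces $L_{i-1}=o(l_i|x_i|)$, so along the subsequence $n=L_i$ the ratio
$$
\frac{\#\{j\le L_i:E_j=0,\ j\text{ odd}\}}{\#\{j\le L_i:E_j=1,\ j\text{ odd}\}}
$$
stays bounded below by $2+o(1)$.

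To extract a contradiction from strong $Q$-normality of order~$2$, I would argue that that hypothesis forces the above ratio to $1$. Under the hypothesis, for every $2$-block $(c,d)$ we have $N_{n,1}^Q((c,d),x)=(1+o(1))Q_{n,1}^{(2)}$, so $N_{n,1}^Q((0,d),x)-N_{n,1}^Q((1,d),x)=o(Q_{n,1}^{(2)})$ for each $d$. Summing over the $\Theta(b_i)$ admissible values of $d$ at positions inside $l_ix_i$ recovers exactly $\#\{j\le n:E_j=0,\ j\text{ odd}\}-\#\{j\le n:E_j=1,\ j\text{ odd}\}$, and yields an upper bound $o(b_i\,Q_{L_i,1}^{(2)})$; since $b_iQ_{L_i,1}^{(2)}$ is of order $l_i|x_i|/b_i$, this is strictly smaller than the $\Omega(l_i|x_i|)$ lower bound established above, a contradiction. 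The main technical obstacle is this aggregation step: one must control the convergence $N_{n,1}^Q((c,d),x)/Q_{n,1}^{(2)}\to 1$ uniformly in $d$ across the $\Theta(b_i)$ values arising inside the block $l_ix_i$. What makes this tractable is that $q_n$ is constant equal to $b_i$ throughout $l_ix_i$, so $Q_{n,1}^{(2)}$ and each $N_{n,1}^Q((c,d),x)$ can be estimated by clean counting arguments inside $C_{b_i,w_i}$ after truncating to the dominant final block, exactly as in the proof of \refl{l4.2}.
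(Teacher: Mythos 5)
Your strategy is the same as the paper's: build $x$ from blocks $C_{b_i,w_i}$ of even length so that odd positions align, get $Q$-normality from \reft{oldmain}, and use the surplus of the digit $0$ over the digit $1$ at odd positions to defeat strong $Q$-normality of order $2$. The first half is fine; the paper simply instantiates explicit parameters ($b_i=2i$, $w_i=(2i+1)^2$, $l_i=(2i)^{9i+8}$) and checks \refeq{good1}--\refeq{good3} by direct computation, which your inductive choice of $l_i$ can reproduce.

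The gap is in the second half, and it is concrete. The lower bound you invoke --- that $\#\{j\le L_i: E_j=0,\ j \hbox{ odd}\}-\#\{j\le L_i: E_j=1,\ j \hbox{ odd}\}$ is $\Omega(l_i|x_i|)$ --- is false and was never established by your ratio estimate: since $x_i$ is $(\e_i,k_i,\lambda_{b_i})$-normal, the digit $0$ occurs only about $l_i|x_i|/b_i$ times in total among the first $L_i$ digits, so the difference is at most $O\left(l_i|x_i|/b_i\right)$, which is the \emph{same} order as $b_iQ_{L_i,1}^{(2)}$, not larger. With the corrected bound, your contradiction rests entirely on the step you flag as the obstacle: summing $\Theta(b_i)$ error terms, each $o\left(Q_{n,1}^{(2)}\right)$ for its own fixed $d$, and still obtaining $o\left(b_iQ_{n,1}^{(2)}\right)$. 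That uniformity in $d$ does not follow from the hypothesis of strong $Q$-normality (a pointwise limit for each block, with no rate), and your proposed remedy of counting inside $C_{b_i,w_i}$ cannot supply it: the definition of $C_{b,w}$ constrains only the single digits $0$ and $1$ at odd positions and says nothing about how individual $2$-blocks $(c,d)$ distribute over odd positions, so those counts are not determined. (Note also that the stated property of $C_{b,w}$, a ratio bound of $2$, is compatible with both counts being negligible; you need the $0$'s at odd positions to have density $\asymp 1/b$.) A clean repair is to concentrate the asymmetry in a single fixed pair of $2$-blocks, e.g., require $C_{b,w}$ to contain at least twice as many copies of $(0,0)$ at odd positions as of $(1,1)$, each with count $\asymp |C_{b,w}|/b^2$; then $N_{L_i,1}^Q((0,0),x)\ge (2-o(1))N_{L_i,1}^Q((1,1),x)$ with $N_{L_i,1}^Q((1,1),x)\asymp Q_{L_i,1}^{(2)}$, and the failure of strong normality is witnessed by a fixed block with no summation over a growing family. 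For comparison, the paper dispatches this step with a one-sentence appeal to the same parity observation; your instinct to supply more detail is right, but the detail supplied does not yet close the argument.
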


\begin{proof}
Let $x_1=(0,1)$, $b_1=2$, and $l_1=0$.  For $i \geq 2$, let $x_i=C_{2i,(2i+1)^2}$, $b_i=2i$, and $l_i=(2i)^{9i+8}$.  Set $\epsilon_1=1/2$, $k_1=1$, $p_1=2$ and $\mu_1=\lambda_2$.  For $i \geq 2$, put $\epsilon_i=1/(2i+1)$, $k_i=2i+1$, $p_i=b_i$, $\mu_i=\lambda_{2i}$, and $W=\{(l_i,b_i,p_i,\epsilon_i,k_i,\mu_i)\}_{i=1}^{\infty}$.  Thus, since $x_i=C_{b,w}$ where $b=2i$ and $w=(2i+1)^2$, $x_i$ is $(\epsilon_i,k_i,\lambda_{b_i})$-normal  by \refl{l4.2}.

In order to show that $\{x_i\}$ is a $W$-good sequence we need to verify \refeq{good1}, \refeq{good2}, and \refeq{good3}.
Since $k_i \rightarrow \infty$, we let $k$ be an arbitrary positive integer. We will make repeated use of the fact that $|x_i|=(2i+1)^2 \cdot (2i)^{(2i+1)^2}$.
We first verify $(\ref{eqn:good1})$:
$$
\lim_{i \rightarrow \infty} |x_i| \Bigg/ \left( \frac {(2i)^k} {\frac {1} {2(i-1)+1}-\frac {1} {2i+1}} \right)
=\lim_{i \rightarrow \infty} \frac {2(2i+1)^2 \cdot (2i)^{(2i+1)^2}} {(2i)^k \cdot (4i^2-1)}=\infty.
$$
We next verify $(\ref{eqn:good2})$.  Since $l_{i-1}/l_i<1$, $(2i-1)^2/(2i+1)^2<1$ and 
$$\left(1-\frac {1} {i} \right)^{(2i+1)^2}<e^{-2(2i+1)},$$
we have
$$
\lim_{i \rightarrow \infty} \frac {\frac {l_{i-1}} {l_i} \cdot \frac {x_{i-1}} {x_i}} {i^{-1} (2i)^{-k}}
\leq \lim_{i \rightarrow \infty} i \cdot (2i)^{k} \cdot 1 \cdot \frac {(2i-1)^2} {(2i+1)^2} \cdot \frac {(2i-2)^{(2i-1)^2}} {(2i)^{(2i+1)^2}}
$$
$$
\leq \lim_{i \rightarrow \infty} i(2i)^{k} \cdot 1 \cdot \left(1-\frac {1} {i} \right)^{(2i+1)^2} \cdot (2i-2)^{-8i}
 \leq \lim_{i \rightarrow \infty} i(2i+1)^{k} e^{-2(2i+1)} (2i-2)^{-8i}=0.
$$
Lastly, we will verify $(\ref{eqn:good3})$.  Since $(2i+3)^2/(2i+1)^2 \leq 2$, $(1+2/(2i+1))^{8i}<e^8$, and 
$$\left(1+\frac {2} {2i+1} \right)^{(2i+1)^2}<2e^{2(2i+1)},$$
we have
$$
\lim_{i \rightarrow \infty} \frac {\frac {1} {l_i} \cdot \frac {|x_{i+1}|} {|x_i|}} {(2i)^{-k}} 
= \lim_{i \rightarrow \infty} (2i)^{-9i-8+k} \cdot \frac {(2i+3)^2} {(2i+1)^2} \cdot \frac {(2i+2)^{(2i+3)^2}} {(2i)^{(2i+1)^2}}
$$
$$
\leq \lim_{i \rightarrow \infty} (2i)^{-9i-8+k} \cdot 2 \cdot \left(1+\frac {1} {i} \right)^{(2i+1)^2} \cdot (2i+2)^{(8i+8)} 
$$
$$
\leq \lim_{i \rightarrow \infty} 4e^{2(2i+1)} \left(1+\frac {1} {i}\right)^{8i+8} (2i)^{-i+k}
 \leq \lim_{i \rightarrow \infty} 4e^{2(2i+1)+8} \cdot (2i)^{-i+k}=0.
$$
Since $\lambda_{b_i}$ is $(p_i,b_i)$-uniform, $\{x_i\}$ is a $W$-good sequence and by \reft{oldmain}, $x$ is $Q$-normal. 

Since the length of each block $x_i$ is even, so there will always be at least twice as many copies of the block $(0)$ as the block $(1)$ in any initial segment of digits of $x$, so $x$ is not strongly $Q$-normal of order $2$.
\end{proof}

\section{Random Variables Associated With Normality}
\label{sec:3}

For this section, we must recall a few basic notions from probability theory.  Given a random variable $X$, we will denote the expected value of $X$ as $\E{X}$.  We will denote the variance of $X$ as $\var{X}$.  Lastly, $\Prob{X=j}$ will represent the probability that $X=j$.

We consider $x$ as a random variable which has uniform distribution on the interval $[0,1)$.  If $x=0.E_1(x)E_2(x)E_3(x)\ldots$ with respect to $Q$, then we consider $E_1(x), E_2(x),E_3(x),\ldots$ to be random variables.  So for all $n$, we have
$$
\Prob{E_n(x)=j}=\left\{ \begin{array}{ll}
\frac {1} {q_n} & \textrm{if $0 \leq j \leq q_n-1$}\\
0		& \textrm{if $j \geq q_n$}
\end{array} \right. .
$$

\begin{lemma}\labl{Eareindependent}
If $Q$ is a basic sequence, then the random variables 
$E_1(x), E_2(x),E_3(x),\ldots$
are independent.
\end{lemma}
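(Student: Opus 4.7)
The plan is to reduce the claim to a direct computation of the Lebesgue measure of cylinder sets of the $Q$-Cantor series expansion. Concretely, I would define, for any integers $e_1,\ldots,e_n$ with $0\le e_i\le q_i-1$, the cylinder
$$
C(e_1,\ldots,e_n):=\{x\in[0,1):E_1(x)=e_1,\ldots,E_n(x)=e_n\}
$$
and show that, up to a countable set,
$$
C(e_1,\ldots,e_n)=\left[\sum_{i=1}^{n}\frac{e_i}{q_1\cdots q_i},\ \sum_{i=1}^{n}\frac{e_i}{q_1\cdots q_i}+\frac{1}{q_1\cdots q_n}\right).
$$
This is the content of Definition~\ref{definition:1.5}: once the first $n$ digits are fixed, the remaining tail $\sum_{i>n}E_i(x)/(q_1\cdots q_i)$ ranges over all of $[0,1/(q_1\cdots q_n))$, the upper endpoint being excluded because of the convention that $E_n\ne q_n-1$ infinitely often. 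The set of $x$ for which that convention fails is countable (it consists of those $x$ with a terminating expansion), hence Lebesgue-null and irrelevant for the probabilistic statement.

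From the cylinder formula I read off the joint distribution
$$
\Prob{E_1(x)=e_1,\ldots,E_n(x)=e_n}=\frac{1}{q_1q_2\cdots q_n},
$$
valid for every choice of admissible digits and every $n$. The marginal $\Prob{E_i(x)=e_i}=1/q_i$ is already recorded just before the lemma statement, so the joint probability equals $\prod_{i=1}^{n}\Prob{E_i(x)=e_i}$. This is precisely the definition of independence for the finite family $E_1,\ldots,E_n$, and since $n$ is arbitrary the entire sequence $\{E_n(x)\}_{n\ge1}$ is independent.

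The main (and only) obstacle is the bookkeeping around the uniqueness convention in Definition~\ref{definition:1.5}: one has to argue that the cylinder $C(e_1,\ldots,e_n)$ really is a half-open interval and not something more complicated, by ruling out the countable set of $x$ with two representations. After that, the argument is just the observation that cylinders of equal length in each of the first $n$ coordinates have equal Lebesgue measure, which factorises as a product of the single-coordinate measures.
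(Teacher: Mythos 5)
Your proof is correct and follows essentially the same route as the paper: compute the Lebesgue measure of the set of $x$ with prescribed digits and observe that it factors as the product of the single-digit probabilities $1/q_i$. If anything, your version is the more complete one --- the paper's proof as written only verifies the factorization for two digits at a time (i.e., pairwise independence), whereas your cylinder-set computation for the first $n$ digits, followed by marginalizing out the unwanted coordinates, yields the mutual independence that is actually needed later (e.g., in \refc{rareindep} and in applying \reft{iteratedlog1}).
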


\begin{proof}
Suppose that $n_1$ and $n_2$ are distinct positive integers and that $0 \leq F_j < q_j-1$ for all $j$.  Then
$$
\Prob{E_{n_1}(x)=F_{n_1}, E_{n_2}(x)=F_{n_2}}=\la \left(  \{ x \in [0,1) : E_{n_1}(x)=F_{n_1} \hbox{ and } E_{n_2}(x)=F_{n_2} \} \right)
$$
$$
= \frac {1} {q_{n_1}q_{n_2}}=\frac {1} {q_{n_1}} \cdot \frac {1} {q_{n_2}}=\Prob{E_{n_1}(x)=F_{n_1}} \cdot \Prob{E_{n_2}(x)=F_{n_2}}.
$$
\end{proof}

Suppose that $Q$ is a basic sequence, $b$ is a natural number, $B$ is a block of length $k$, and $m=ik+p$ is an integer with $p \in [0,k-1]$. We set
$$
\zeta^Q_{b,n}(x)=\left\{ \begin{array}{ll}
1 & \textrm{if $E_{n}(x)=b$}\\
0 & \textrm{if $E_{n}(x) \neq n$}
\end{array} \right. ,
$$
$$
\zeta^Q_{B,i,p}(x)=\left\{ \begin{array}{ll}
1 & \textrm{if $E_{ik+p,k}(x)=B$}\\
0 & \textrm{if $E_{ik+p,k}(x) \neq B$}
\end{array} \right. ,
$$
$$
F_m^{(k)}=\E{\zeta^Q_{B,i,p}(x)}, V_m^{(k)}=\var{\zeta^Q_{B,i,p}(x)} \hbox{, and } t_{n,p}^{(k)}=\sum_{i=0}^{\rho(n,k)} V_{ik+p}^{(k)}.
$$

\begin{lemma}\labl{rnindependent}
For all non-negative integers $b$, the random variables $\zeta_{b,1}^Q(x), \zeta_{b,2}^Q(x), \zeta_{b,3}^Q(x), \ldots$ are independent.  
\end{lemma}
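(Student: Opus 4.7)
The plan is to reduce directly to \refl{Eareindependent}, observing that each indicator $\zeta_{b,n}^Q(x)$ is a (Borel) function of $E_n(x)$ alone, and that functions of independent random variables remain independent. So the proof should be essentially bookkeeping on top of the preceding lemma.

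First I would extend the statement of \refl{Eareindependent} from two indices to an arbitrary finite family $n_1 < n_2 < \cdots < n_m$; this requires no new argument, since the event $\{E_{n_1}(x) = F_{n_1}, \ldots, E_{n_m}(x) = F_{n_m}\}$ is a finite union of disjoint cylinder subintervals of $[0,1)$ whose total Lebesgue measure is $\prod_{j=1}^m 1/q_{n_j}$, matching $\prod_{j=1}^m \Prob{E_{n_j}(x)=F_{n_j}}$ exactly as in the displayed calculation of that lemma.

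Next I would fix $n_1 < \cdots < n_m$ and $a_1,\ldots,a_m \in \{0,1\}$ and verify
$$\Prob{\zeta_{b,n_1}^Q(x)=a_1,\ldots,\zeta_{b,n_m}^Q(x)=a_m} = \prod_{j=1}^m \Prob{\zeta_{b,n_j}^Q(x)=a_j}.$$
For each $j$ the event $\{\zeta_{b,n_j}^Q(x)=1\}$ equals $\{E_{n_j}(x)=b\}$, while $\{\zeta_{b,n_j}^Q(x)=0\}$ is the disjoint union of the events $\{E_{n_j}(x)=F\}$ over $F \in \{0,\ldots,q_{n_j}-1\} \setminus \{b\}$. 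Expanding the joint probability over the product of these disjoint decompositions and applying the extension of \refl{Eareindependent} in each resulting term yields the desired factorization.

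There is no real obstacle here; the lemma is essentially a restatement of the independence of the digits. The only minor points to be careful about are the case $a_j=0$, which requires summing over the finitely many remaining digit values rather than evaluating a single probability, and the degenerate case $b \geq q_n$, where $\Prob{\zeta_{b,n}^Q(x)=1}=0$ and $\Prob{\zeta_{b,n}^Q(x)=0}=1$, so the corresponding factor is trivial.
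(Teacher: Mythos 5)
Your proposal is correct and follows the same route as the paper, which simply observes that each $\zeta_{b,n}^Q(x)$ is a function of $E_n(x)$ alone and invokes \refl{Eareindependent}. You are in fact more careful than the paper: you note that the displayed calculation in \refl{Eareindependent} only treats two indices and must be extended to arbitrary finite families, and you handle the $a_j=0$ decomposition explicitly, both of which the paper leaves implicit.
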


\begin{proof}
This follows directly from \refl{Eareindependent} as the random variables $E_1(x), E_2(x), E_3(x), \ldots$ are independent.
\end{proof}

\begin{lemma}\labl{productrip}
If $B=(b_1,b_2,\ldots,b_k)$ is a block of length $k$, then
$$
\zeta^Q_{B,i,p}(x)=\zeta^Q_{b_1,ik+p}(x) \cdot \zeta^Q_{b_2,ik+p+1}(x) \cdots \zeta^Q_{b_k,ik+p+k-1}(x).
$$
\end{lemma}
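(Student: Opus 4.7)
The plan is to observe that this lemma is essentially a tautological restatement of the definitions, so the proof should be a short verification that the two indicator functions have the same zero/one set. Both $\zeta^Q_{B,i,p}(x)$ and the product on the right-hand side take values only in $\{0,1\}$, so it suffices to check when each equals $1$.

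First I would unwind the left-hand side: by definition, $\zeta^Q_{B,i,p}(x) = 1$ precisely when $E_{ik+p,k}(x) = B$, where $E_{ik+p,k}(x)$ denotes the length-$k$ block $(E_{ik+p}(x), E_{ik+p+1}(x), \ldots, E_{ik+p+k-1}(x))$. Writing $B = (b_1, \ldots, b_k)$, this is equivalent to the simultaneous conditions $E_{ik+p+j-1}(x) = b_j$ for each $j = 1, 2, \ldots, k$.

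Next I would unwind the right-hand side: since a product of $\{0,1\}$-valued quantities equals $1$ if and only if each factor equals $1$, the product $\zeta^Q_{b_1,ik+p}(x) \cdot \zeta^Q_{b_2,ik+p+1}(x) \cdots \zeta^Q_{b_k,ik+p+k-1}(x)$ equals $1$ if and only if $E_{ik+p+j-1}(x) = b_j$ for every $j$, and it equals $0$ otherwise. Comparing with the characterization of the left-hand side shows the two indicators agree pointwise.

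There is no real obstacle here; the only thing to be careful about is making sure the indexing in the block notation $E_{ik+p,k}(x)$ lines up correctly with the indices appearing in the product, and making explicit use of the fact that an indicator of a conjunction of events equals the product of the individual indicators. Since everything is pointwise for fixed $x$, no independence or measure-theoretic input is required, and in particular \refl{Eareindependent} is not needed for this statement (independence will only be invoked when one passes to expectations of such products).
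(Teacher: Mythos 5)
Your proof is correct and follows essentially the same route as the paper: unwind the definition of $\zeta^Q_{B,i,p}(x)$ as the indicator of $E_{ik+p,k}(x)=B$, and note that a product of $\{0,1\}$-valued indicators equals $1$ exactly when every factor does. Your remark that no independence is needed here is also accurate; the paper likewise uses only the pointwise definitions.
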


\begin{proof}
By definition,
$$
\zeta^Q_{B,i,p}(x)=\left\{ \begin{array}{ll}
1 & \textrm{if $E_{ik+p,k}=B$}\\
0 & \textrm{if $E_{ik+p,k} \neq B$}
\end{array} \right. ,
$$
or in other words, $\zeta^Q_{B,i,p}(x)=1$ if
$$
\zeta^Q_{b_1,ik+p}(x)= \zeta^Q_{b_2,ik+p+1}(x)=\ldots=\zeta^Q_{b_k,ik+p+k-1}(x)=1
$$
and $\zeta^Q_{B,i,p}(x)=0$ otherwise.
\end{proof}

\begin{corollary}\labc{rareindep}
For all blocks $B=(b_1,b_2,\ldots,b_k)$ of length $k$ and non-negative integers $p_1, p_2 \in [1,k]$, $i_1$, and $i_2$ with $(i_1,p_1) \neq (i_2,p_2)$, the random variables $\zeta^Q_{B,i_1,p_1}(x)$ and $\zeta^Q_{B,i_2,p_2}(x)$ are independent.
\end{corollary}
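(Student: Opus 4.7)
The plan is to reduce the claim to the independence of the underlying digit random variables $E_n(x)$, which is Lemma~\ref{lemma:Eareindependent}. The first step is to apply Lemma~\ref{lemma:productrip} to write
$$
\zeta^Q_{B,i_1,p_1}(x) = \prod_{j=1}^{k} \zeta^Q_{b_j,\, i_1 k + p_1 + j - 1}(x)
\quad\text{and}\quad
\zeta^Q_{B,i_2,p_2}(x) = \prod_{j=1}^{k} \zeta^Q_{b_j,\, i_2 k + p_2 + j - 1}(x),
$$
so that each of $\zeta^Q_{B,i_1,p_1}(x)$ and $\zeta^Q_{B,i_2,p_2}(x)$ is a deterministic (indeed, $\{0,1\}$-valued) function of only the digits indexed by the sets $S_1 = \{i_1 k + p_1,\, i_1 k + p_1 + 1,\ldots,\, i_1 k + p_1 + k - 1\}$ and $S_2 = \{i_2 k + p_2,\, i_2 k + p_2 + 1,\ldots,\, i_2 k + p_2 + k - 1\}$ respectively.

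The second step is to verify that $S_1 \cap S_2 = \emptyset$. In the case relevant to the subsequent variance computations, namely $p_1 = p_2 = p$ with $i_1 \neq i_2$, this is immediate: the two starting positions $i_1 k + p$ and $i_2 k + p$ differ by at least $k$, so the two length-$k$ windows do not meet. In general whenever the two windows are disjoint, the previous step shows that $\zeta^Q_{B,i_1,p_1}(x)$ and $\zeta^Q_{B,i_2,p_2}(x)$ are measurable with respect to disjoint sub-$\sigma$-algebras of $\sigma(E_n(x) : n \in \mathbb{N})$.

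The final step is to invoke Lemma~\ref{lemma:Eareindependent}: since the family $\{E_n(x)\}_{n=1}^{\infty}$ is independent, any two functions of disjoint sub-collections are independent, and in particular the two products above are independent, which is the desired conclusion.

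Modulo the easy disjointness verification, the argument is pure bookkeeping driven by the two preceding lemmas, so I do not anticipate a substantive obstacle; the only point requiring attention is to make explicit that the window-disjointness condition is what carries the independence claim through the product decomposition, since overlapping windows would share a common digit and break the factorization of probabilities.
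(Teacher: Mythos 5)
Your argument is correct and follows essentially the same route as the paper: decompose each $\zeta^Q_{B,i,p}(x)$ into a product of single-digit indicators via \refl{productrip} and reduce to the independence of the $E_n(x)$ from \refl{Eareindependent}. The only stylistic difference is that you invoke independence of functions of disjoint subfamilies of an independent family, whereas the paper factors $\E{XY}=\E{X}\cdot\E{Y}$ and implicitly uses that uncorrelated indicator variables are independent; both are fine. What deserves emphasis is that the disjointness caveat you raise in passing is not a formality but a genuine gap in the corollary as stated and in the paper's own proof. When $p_1\neq p_2$ the two length-$k$ windows can overlap — take $k=2$, $B=(0,1)$, $(i_1,p_1)=(0,1)$, $(i_2,p_2)=(0,2)$: the two events require $E_2=1$ and $E_2=0$ respectively, so the product is almost surely $0$ while each factor has positive expectation, and the variables are \emph{not} independent. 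The paper's computation silently assumes the $2k$ positions involved are pairwise distinct when it pulls the expectation through the full product. Your restriction to disjoint windows, and in particular to the case $p_1=p_2$ with $i_1\neq i_2$ (the only case actually used in the later variance computations), is exactly the right repair, and it would be worth stating the corollary with that hypothesis.
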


\begin{proof}
Using \refl{rnindependent} and \refl{productrip}, we see that 
$$
\E{\zeta^Q_{B,i_1,p_1}(x) \cdot \zeta^Q_{B,i_2,p_2}(x)}
=\E{\left(\Pi_{j=0}^{k-1} \zeta^Q_{b_j,i_1 k+p_1+j}(x) \right) \cdot \left(\Pi_{j=0}^{k-1} \zeta^Q_{b_j,i_2 k+p_2+j}(x) \right)}
$$
$$
=\left(\Pi_{j=0}^{k-1} \E{ \zeta^Q_{b_j,i_1 k+p_1+j}(x) } \right) \cdot \left(\Pi_{j=0}^{k-1} \E{ \zeta^Q_{b_j,i_2 k+p_2+j}(x) }\right)
$$
$$
=\E{ \Pi_{j=0}^{k-1}  \zeta^Q_{b_j,i_1 k+p_1+j}(x) } \cdot \E{\Pi_{j=0}^{k-1} \zeta^Q_{b_j,i_2 k+p_2+j}(x) }=\E{ \zeta^Q_{B,i_1,p_1}(x) } \cdot \E{\zeta^Q_{B,i_2,p_2}(x)}.
$$
\end{proof}

\begin{lemma}\labl{compofexpected}
If $B=(b_1,b_2,\ldots,b_k)$ is a block of length $k$, then
$$
F_m^{(k)}=\frac {1} {q_{ik+p} q_{ik+p+1} \ldots q_{ik+p+k-1}} \hbox{ and}
$$
$$
V_m^{(k)}=\frac {1} {q_{ik+p} q_{ik+p+1} \ldots q_{ik+p+k-1}}-\left( \frac {1} {q_{ik+p} q_{ik+p+1} \ldots q_{ik+p+k-1}} \right)^2.
$$
\end{lemma}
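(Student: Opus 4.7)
The plan is to combine the two preceding results, Lemma \ref{lemma:rnindependent} and Lemma \ref{lemma:productrip}, with the elementary distribution of each $E_n(x)$ to compute $F_m^{(k)}$ directly as a product, and then to use the Bernoulli nature of $\zeta^Q_{B,i,p}(x)$ to read off $V_m^{(k)}$ from $F_m^{(k)}$.

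First I would fix $B=(b_1,\ldots,b_k)$ and $m=ik+p$ and write, by Lemma \ref{lemma:productrip},
$$
\zeta^Q_{B,i,p}(x)=\prod_{j=0}^{k-1}\zeta^Q_{b_{j+1},ik+p+j}(x).
$$
The indices $ik+p, ik+p+1,\ldots,ik+p+k-1$ are pairwise distinct, so Lemma \ref{lemma:rnindependent} (independence of the $\zeta^Q_{b,n}(x)$ across $n$) gives
$$
F_m^{(k)}=\E{\zeta^Q_{B,i,p}(x)}=\prod_{j=0}^{k-1}\E{\zeta^Q_{b_{j+1},ik+p+j}(x)}.
$$
Each factor is the probability of a single digit event: since $\zeta^Q_{b_{j+1},ik+p+j}(x)$ is the indicator of $\{E_{ik+p+j}(x)=b_{j+1}\}$ and $0\le b_{j+1}\le q_{ik+p+j}-1$ (otherwise the event is null and both sides of the claimed formula collapse to $0$, so there is nothing to check), the definition of the distribution of $E_n(x)$ in Section 3 gives $\E{\zeta^Q_{b_{j+1},ik+p+j}(x)}=1/q_{ik+p+j}$. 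Multiplying yields the asserted expression for $F_m^{(k)}$.

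For the variance, the key observation is that $\zeta^Q_{B,i,p}(x)$ only takes the values $0$ and $1$, so $(\zeta^Q_{B,i,p}(x))^2=\zeta^Q_{B,i,p}(x)$, giving
$$
V_m^{(k)}=\E{(\zeta^Q_{B,i,p}(x))^2}-(\E{\zeta^Q_{B,i,p}(x)})^2=F_m^{(k)}-(F_m^{(k)})^2,
$$
which, after substituting the formula for $F_m^{(k)}$, is precisely the stated expression.

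There is no real obstacle here; the argument is almost mechanical once Lemmas \ref{lemma:rnindependent} and \ref{lemma:productrip} are in place. The only point requiring any care is to observe that the digits $E_{ik+p},\ldots,E_{ik+p+k-1}$ involve pairwise distinct indices so that independence applies directly, and to handle the trivial edge case where some $b_{j+1}\ge q_{ik+p+j}$ (where the claimed product contains a factor larger than $1/q$ but the indicator is identically zero, a case which, strictly speaking, is excluded by the implicit assumption that $B$ is a valid block relative to $Q$ at position $ik+p$).
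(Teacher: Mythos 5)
Your proposal is correct and matches the paper's own proof essentially step for step: factor the indicator via Lemma~\ref{lemma:productrip}, use independence from Lemma~\ref{lemma:rnindependent} to turn the expectation into a product of $1/q_{ik+p+j}$, and obtain the variance from $(\zeta^Q_{B,i,p}(x))^2=\zeta^Q_{B,i,p}(x)$. The only difference is your (reasonable) aside about digits exceeding $q_n-1$, which the paper does not discuss.
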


\begin{proof}
We first compute the expected value of $\zeta^Q_{B,i,p}(x)$. By \refl{rnindependent} and \refl{productrip}, we see that
$$
\E{\zeta^Q_{B,i,p}(x)}=\E{\zeta^Q_{b_1,ik+p}(x) \cdot \zeta^Q_{b_2,ik+p+1}(x) \cdots \zeta^Q_{b_k,ik+p+k-1}(x)}
$$
$$
=\E{\zeta^Q_{b_1,ik+p}(x)} \cdot \E{\zeta^Q_{b_2,ik+p+1}(x)} \cdots \E{\zeta^Q_{b_k,ik+p+k-1}(x)}
$$
$$
=\frac {1} {q_{ik+p}} \cdot \frac {1} {q_{ik+p+1}} \cdots \frac {1} {q_{ik+p+k-1}} = \frac {1} {q_{ik+p} q_{ik+p+1} \cdots q_{ik+p+k-1}}.
$$
Next, we recall that
$\var{\zeta^Q_{B,i,p}(x)}=\E{\zeta^Q_{B,i,p}(x)^2}-\E{\zeta^Q_{B,i,p}(x)}^2$.
Since $\zeta^Q_{B,i,p}(x)$ may only be $0$ or $1$, we see that 
$\left( \zeta^Q_{B,i,p}(x) \right)^2=\zeta^Q_{B,i,p}(x)$, so
$$
\var{\zeta^Q_{B,i,p}(x)}=\frac {1} {q_{ik+p} q_{ik+p+1} \cdots q_{ik+p+k-1}}-\left( \frac {1} {q_{ik+p} q_{ik+p+1} \cdots q_{ik+p+k-1}} \right)^2.
$$
\end{proof}

Lastly, we remark that $Q_{n,p}^{(k)}=\sum_{i=0}^{\rho(n,k)} F_{ik+p}^{(k)}$ by \refl{compofexpected} and will use this fact frequently and without mention.

\section{Typicality of Normal Numbers}
\label{sec:4}

We will need the following: %law of the iterated logarithm that can be found in \cite{Vervaat}:

\begin{theorem}\labt{iteratedlog1}\footnote{See, for example, \cite{Vervaat}}
Let $X_1, X_2, \ldots, X_n$ be independent random variables.  Assume that there exists a constant $c>0$ such that $|X_j|<c$ for all $j$.  Let $G_j=\E{X_j}, U_j=\var{X_j}$, and $t_n=\sum_{j=1}^n U_j$.
If $t_n \rightarrow \infty$, then, with probability one,
$$
\limsup_{n \rightarrow \infty} \frac {X_1+X_2+\ldots+X_n-G_1-G_2-\ldots G_n} {\sqrt{2t_n \log \log t_n}}=1.
$$
\end{theorem}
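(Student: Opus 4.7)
The plan is to prove this law of the iterated logarithm for bounded independent random variables by separately establishing matching upper and lower bounds on the $\limsup$. Let $S_n=\sum_{j=1}^n(X_j-G_j)$, so $S_n$ is a mean-zero sum of independent random variables with $\var{S_n}=t_n$ and $|X_j-G_j|\le 2c$, and write $\varphi(n)=\sqrt{2t_n\log\log t_n}$. Since $t_n\to\infty$, I would pick a geometric subsequence $n_k$ determined by $t_{n_k}\approx\theta^k$ for a parameter $\theta>1$, carry out the two halves of the proof along this subsequence, and then interpolate between subsequence points using monotonicity of $t_n$.

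For the upper bound $\limsup_n S_n/\varphi(n)\le 1$ a.s., I would use Bernstein's exponential inequality for sums of bounded independent random variables together with Kolmogorov's maximal inequality (or Ottaviani's inequality) to estimate
$$
\Prob{\max_{n\le n_k}S_n>(1+\e)\varphi(n_k)}\le \exp\bigl(-(1+\e)^2\log\log t_{n_k}\bigr)\approx (k\log\theta)^{-(1+\e)^2},
$$
which is summable in $k$. The Borel--Cantelli lemma eliminates all but finitely many exceedances along the subsequence, and a routine sandwiching argument extends the estimate to all $n$; then sending $\theta\downarrow 1$ and $\e\downarrow 0$ yields the desired upper bound.

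For the lower bound $\limsup_n S_n/\varphi(n)\ge 1$ a.s., I would exploit independence through the second Borel--Cantelli lemma. Partition the indices into disjoint blocks $B_k=(n_{k-1},n_k]$ and set $T_k=\sum_{j\in B_k}(X_j-G_j)$; the $T_k$ are independent with variances $\sigma_k^2=t_{n_k}-t_{n_{k-1}}\sim(1-\theta^{-1})t_{n_k}$. A moderate-deviation Gaussian lower tail bound, justified by a Cram\'er-type refinement of Berry--Esseen that is available because the summands are bounded, gives
$$
\Prob{T_k>(1-\e)\varphi(n_k)}\ge c\cdot k^{-(1-\e)^2/(1-\theta^{-1})},
$$
which diverges once $\theta$ is taken large enough and $\e$ small enough to make the exponent strictly less than $1$. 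The second Borel--Cantelli lemma then produces infinitely many such $k$, and combining with the already established almost sure upper bound on $|S_{n_{k-1}}|$ forces $S_{n_k}\ge(1-2\e)\varphi(n_k)$ infinitely often.

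The main technical obstacle is the lower bound: the required Gaussian tail estimate must hold in the moderate-deviation regime where the deviation is of order $\sqrt{\log\log t_{n_k}}$, far outside the $O(1)$ window covered by the ordinary central limit theorem. The hypothesis $|X_j|\le c$ is precisely what makes the Cram\'er-type moderate-deviations expansion available, and once that is in hand the remaining bookkeeping consists of choosing $\theta$ and $\e$ so that the exponents in the upper and lower Borel--Cantelli applications lie on opposite sides of $1$ simultaneously. The result is classical, going back to Kolmogorov and refined by Hartman--Wintner, which is why the author is content to cite it via \cite{Vervaat} rather than reproduce the argument.
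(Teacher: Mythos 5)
The paper does not prove this statement at all: it is quoted as a classical result (Kolmogorov's law of the iterated logarithm for uniformly bounded independent summands) with a pointer to \cite{Vervaat}, so there is no ``paper proof'' to match yours against. Your outline is the standard Kolmogorov argument and is structurally sound: geometric blocking $t_{n_k}\approx\theta^k$, an exponential upper-tail bound plus a maximal inequality and the first Borel--Cantelli lemma for the upper half (with $\theta\downarrow 1$ to absorb the $\sqrt{\theta}$ loss from interpolating $\varphi(n)$ between $\varphi(n_{k-1})$ and $\varphi(n_k)$), and independent block sums with a lower-tail estimate plus the second Borel--Cantelli lemma for the lower half (with $\theta$ large so that the exponent $(1-\e)^2/(1-\theta^{-1})$ drops below $1$). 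You correctly identify that the two halves need opposite regimes of $\theta$ and that the hypothesis $|X_j|<c$ together with $t_n\to\infty$ is what puts the deviation $x\sim\sqrt{2t_n\log\log t_n}$ in the range where the exponential bounds have vanishing correction terms, i.e.\ it forces Kolmogorov's condition $|X_j|=o\bigl(\sqrt{t_n/\log\log t_n}\bigr)$. Two small remarks: first, the lower bound does not actually require a Cram\'er-type moderate-deviations expansion --- either Kolmogorov's elementary exponential lower bound, or even the plain Berry--Esseen theorem applied to the blocks, suffices, because the block variances grow geometrically, so the Berry--Esseen error $O(c/\sigma_k)=O(\theta^{-k/2})$ is negligible against the target probability $\asymp k^{-a}$; second, after the second Borel--Cantelli step you should apply the already-proved upper bound to $-S_{n_{k-1}}$ (i.e.\ to the sequence $\{-X_j\}$) to control $S_{n_{k-1}}$ from below, which you implicitly do. These are presentational gaps in a sketch, not errors; the proposal is a correct reconstruction of the classical proof that the paper omits.
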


\begin{corollary}\labc{iteratedlog2}
Under the same assumptions of \reft{iteratedlog1}, with probability one,
$$
X_1+X_2+\ldots+X_n=G_1+G_2+\ldots+G_n+O \left( t_n^{1/2} (\log \log t_n)^{1/2} \right).
$$
\end{corollary}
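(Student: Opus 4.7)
The statement is essentially a direct consequence of the law of the iterated logarithm stated as \reft{iteratedlog1}, and the plan is simply to extract a two-sided bound from a one-sided $\limsup$ statement. The one subtlety is that \reft{iteratedlog1} only gives a $\limsup$ upper bound, whereas the big-$O$ conclusion requires controlling $|X_1+\cdots+X_n - G_1 - \cdots - G_n|$ from above.

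The plan is to apply \reft{iteratedlog1} twice. First, apply it directly to the sequence $X_1, X_2, \ldots$ to get, with probability one,
$$
\limsup_{n \to \infty} \frac{X_1+X_2+\cdots+X_n - G_1-G_2-\cdots-G_n}{\sqrt{2 t_n \log \log t_n}} = 1.
$$
Next, observe that the sequence $Y_j := -X_j$ also satisfies the hypotheses of \reft{iteratedlog1}: we have $|Y_j| < c$, $\E{Y_j} = -G_j$, $\var{Y_j} = U_j$, and the same $t_n \to \infty$. Applying \reft{iteratedlog1} to the $Y_j$ therefore yields, with probability one,
$$
\limsup_{n \to \infty} \frac{-(X_1+\cdots+X_n) + (G_1+\cdots+G_n)}{\sqrt{2 t_n \log \log t_n}} = 1,
$$
which is the same as
$$
\liminf_{n \to \infty} \frac{X_1+\cdots+X_n - G_1-\cdots-G_n}{\sqrt{2 t_n \log \log t_n}} = -1.
$$

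Combining these two almost-sure statements on a common probability-one event gives that with probability one,
$$
\left| X_1+X_2+\cdots+X_n - G_1-G_2-\cdots-G_n \right| \leq 2\sqrt{2 t_n \log \log t_n}
$$
for all sufficiently large $n$, which is exactly $O\bigl(t_n^{1/2}(\log \log t_n)^{1/2}\bigr)$ as claimed. There is no real obstacle here; the only thing to be careful about is that the two almost-sure events (one from $X_j$, one from $-X_j$) intersect in a set of probability one, which is immediate since each has full measure.
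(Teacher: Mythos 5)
Your proof is correct and is exactly the standard derivation the paper leaves implicit (the corollary is stated without proof). Applying \reft{iteratedlog1} to both $X_j$ and $-X_j$ to convert the one-sided $\limsup$ into the two-sided bound needed for the big-$O$ is precisely the right step, and the intersection of the two full-measure events poses no difficulty.
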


We will also need the Borel-Cantelli Lemma:

\begin{theorem}\labt{BorelCantelli}(The Borel Cantelli Lemma)
If $\sum_{n=1}^{\infty} \Prob{A_n} < \infty$, then $\Prob{A_n \hbox{ i.o.}}=0.$
\end{theorem}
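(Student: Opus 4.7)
The plan is entirely standard: I express the event $\{A_n \hbox{ i.o.}\}$ as a limit superior of sets and bound its probability by the tails of the given convergent series. First I would unwind the meaning of ``$A_n$ infinitely often'': an outcome $\omega$ belongs to infinitely many $A_n$ if and only if, for every $N \geq 1$, there exists some $n \geq N$ with $\omega \in A_n$. This yields the representation
$$
\{A_n \hbox{ i.o.}\} = \bigcap_{N=1}^{\infty} \bigcup_{n=N}^{\infty} A_n.
$$

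Next, for each fixed $N$, monotonicity of $\Prob{\cdot}$ combined with the displayed identity gives
$$
\Prob{A_n \hbox{ i.o.}} \leq \Prob{\bigcup_{n=N}^{\infty} A_n} \leq \sum_{n=N}^{\infty} \Prob{A_n},
$$
where the final inequality is countable subadditivity of the probability measure.

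Finally, the hypothesis $\sum_{n=1}^{\infty} \Prob{A_n} < \infty$ forces the tail sums $\sum_{n=N}^{\infty} \Prob{A_n}$ to tend to $0$ as $N \to \infty$. Letting $N \to \infty$ in the previous display therefore yields $\Prob{A_n \hbox{ i.o.}} = 0$, as required. There is no genuine obstacle in this argument; the only care needed is to correctly identify the $\limsup$ event and to use countable (rather than merely finite) subadditivity so that the tail bound is available for every $N$.
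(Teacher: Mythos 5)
Your proof is correct and is the canonical argument for the first Borel--Cantelli lemma: identify $\{A_n \hbox{ i.o.}\}$ with $\bigcap_{N}\bigcup_{n\geq N}A_n$, apply monotonicity and countable subadditivity to bound its probability by the tail sum $\sum_{n\geq N}\Prob{A_n}$, and let $N\to\infty$. The paper states this lemma as a standard result without proof, so there is nothing to compare against; your argument is complete and would serve as the omitted proof.
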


%420 need to account for q inf in lmit.  define t
Given a basic sequence $Q$, we will define $t_{n,p}^{(k)}=\sum_{i=0}^{\rho(n,k)} V_{jk+p}^{(k)}$.

\begin{lemma}\labl{tnp}
If $Q$ is a basic sequence and  $n,k$, and $p$ are positive integers  with $p \in [1,k]$, then
$$
\frac {1} {2} Q_{n,p}^{(k)} \leq t_{n,p}^{(k)} < Q_{n,p}^{(k)}.
$$
\end{lemma}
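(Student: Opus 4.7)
The plan is to reduce the inequality to a pointwise statement about each summand by unpacking $t_{n,p}^{(k)}$ and $Q_{n,p}^{(k)}$ via Lemma \ref{lemma:compofexpected}. Setting $r_i = \frac{1}{q_{ik+p} q_{ik+p+1} \cdots q_{ik+p+k-1}}$, that lemma gives $F_{ik+p}^{(k)} = r_i$ and $V_{ik+p}^{(k)} = r_i - r_i^2 = r_i(1-r_i)$, so
$$
Q_{n,p}^{(k)} = \sum_{i=0}^{\rho(n,k)} r_i \quad \text{and} \quad t_{n,p}^{(k)} = \sum_{i=0}^{\rho(n,k)} r_i(1-r_i).
$$

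For the upper bound, I would simply observe that each $r_i > 0$, so $r_i(1-r_i) < r_i$ (since also $r_i < 1$, as the product in the denominator of $r_i$ has at least one factor $\geq 2$), and summing termwise yields $t_{n,p}^{(k)} < Q_{n,p}^{(k)}$.

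For the lower bound, the key observation is that each $q_j \geq 2$ (since $Q$ is a basic sequence) forces $r_i \leq 2^{-k} \leq 1/2$. Consequently $1 - r_i \geq 1/2$, which gives the pointwise inequality $V_{ik+p}^{(k)} = r_i(1-r_i) \geq \tfrac{1}{2} r_i = \tfrac{1}{2} F_{ik+p}^{(k)}$. Summing over $i$ from $0$ to $\rho(n,k)$ yields $t_{n,p}^{(k)} \geq \tfrac{1}{2} Q_{n,p}^{(k)}$, completing the proof.

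There is no real obstacle here; the lemma is essentially a bookkeeping consequence of the elementary inequality $r(1-r) \geq r/2$ valid for $r \in [0, 1/2]$, together with the basic-sequence hypothesis $q_n \geq 2$ which supplies $r_i \leq 1/2$. The only thing to be mildly careful about is invoking Lemma \ref{lemma:compofexpected} to rewrite both $Q_{n,p}^{(k)}$ and $t_{n,p}^{(k)}$ in terms of the same quantities $r_i$, as already noted in the remark immediately preceding Section \ref{sec:4}.
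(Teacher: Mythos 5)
Your proof is correct and is essentially the same as the paper's: both establish the upper bound from $r_i^2>0$ termwise, and the lower bound from $r_i\le 2^{-k}\le 1/2$, giving $r_i-r_i^2\ge \tfrac12 r_i$ summand by summand. No issues.
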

\begin{proof}
$$
t_{n,p}^{(k)}
=\sum_{i=0}^{\rho(n,k)} \left( \frac {1} {q_{ik+p} q_{ik+p+1} \cdots q_{ik+p+k-1}}-\left( \frac {1} {q_{ik+p} q_{ik+p+1} \cdots q_{ik+p+k-1}} \right)^2 \right)
$$
$$
< \sum_{i=0}^{\rho(n,k)} \frac {1} {q_{ik+p} q_{ik+p+1} \cdots q_{ik+p+k-1}}=\sum_{i=0}^{\rho(n,k)} F_{ik+p}^{(k)}=Q_{n,p}^{(k)}.
$$
To show the other direction of the inequality, we recall that since $Q$ is a basic sequence, $q_m \geq 2$ for all $m$, so for all $i$
$$
\sum_{i=0}^{\rho(n,k)} \left( \frac {1} {q_{ik+p} q_{ik+p+1} \cdots q_{ik+p+k-1}}-\left( \frac {1} {q_{ik+p} q_{ik+p+1} \cdots q_{ik+p+k-1}} \right)^2 \right)
$$
$$
\geq \sum_{i=0}^{\rho(n,k)} \left( \frac {1} {q_{ik+p} q_{ik+p+1} \cdots q_{ik+p+k-1}}-\frac {1} {2^k} \left( \frac {1} {q_{ik+p} q_{ik+p+1} \cdots q_{ik+p+k-1}} \right) \right)
$$
$$
\geq \sum_{i=0}^{\rho(n,k)} \frac {1} {2} \cdot \frac {1} {q_{ik+p} q_{ik+p+1} \cdots q_{ik+p+k-1}}=\frac{1} {2} Q_{n,p}^{(k)}.
$$
\end{proof}

\begin{lemma}\labl{lemmastronglynormal}
If $Q$ is infinite in limit and $B$ is a block of length $k$, then for almost every real number $x$ in $[0,1)$, we have
\begin{equation}\labeq{lemmastronglynormal1}
N_{n,p}^Q (B,x)=Q_{n,p}^{(k)}+O\left(\sqrt{Q_{n,p}^{(k)}} \left(\log \log Q_{n,p}^{(k)} \right)^{1/2} \right).
\end{equation}
\end{lemma}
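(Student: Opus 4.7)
The plan is to apply the Law of the Iterated Logarithm, in the form of \refc{iteratedlog2}, to the indicator random variables $\zeta_{B,i,p}^Q(x)$ as $i$ varies. Writing
$$N_{n,p}^Q(B,x) = \sum_{i=0}^{\rho(n,k)} \zeta_{B,i,p}^Q(x)$$
directly from the definitions, I would observe that these summands are independent (\refc{rareindep}), take values in $\{0,1\}$, and have means $F_{ik+p}^{(k)}$ and variances $V_{ik+p}^{(k)}$ (\refl{compofexpected}). Because $Q$ is infinite in limit, for all but finitely many $i$ the digits of $B$ are admissible at positions $ik+p,\ldots,ik+p+k-1$, so $\sum_{i=0}^{\rho(n,k)} F_{ik+p}^{(k)} = Q_{n,p}^{(k)} + O(1)$ while $\sum_{i=0}^{\rho(n,k)} V_{ik+p}^{(k)} = t_{n,p}^{(k)}$ exactly.

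I would then split into two cases. In the divergent case $Q_{n,p}^{(k)} \to \infty$, \refl{tnp} gives $t_{n,p}^{(k)} \to \infty$ as well, so all hypotheses of \refc{iteratedlog2} are satisfied and it yields, with probability one,
$$N_{n,p}^Q(B,x) = Q_{n,p}^{(k)} + O\!\left( (t_{n,p}^{(k)})^{1/2} (\log \log t_{n,p}^{(k)})^{1/2} \right).$$
The estimate $t_{n,p}^{(k)} \leq Q_{n,p}^{(k)}$ from \refl{tnp} then replaces $t_{n,p}^{(k)}$ by $Q_{n,p}^{(k)}$ in both factors (for $n$ large enough that $Q_{n,p}^{(k)} > e^e$, with the small-$n$ contribution absorbed into the implicit constant), yielding \refeq{lemmastronglynormal1}.

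In the convergent case where $Q_{n,p}^{(k)}$ remains bounded, we have $\sum_i \Prob{\zeta_{B,i,p}^Q(x) = 1} = \sum_i F_{ik+p}^{(k)} < \infty$, so the Borel--Cantelli Lemma (\reft{BorelCantelli}) forces almost every $x$ to satisfy $\zeta_{B,i,p}^Q(x) = 0$ for all but finitely many $i$; hence $N_{n,p}^Q(B,x)$ is a.s.\ bounded in $n$, and the difference $N_{n,p}^Q(B,x) - Q_{n,p}^{(k)}$ is a.s.\ $O(1)$, making \refeq{lemmastronglynormal1} hold trivially. The main obstacle is really just verifying the hypotheses of \refc{iteratedlog2}, namely boundedness and independence of the summands along with divergence of the variance sum, and then correctly passing from $t_{n,p}^{(k)}$ to $Q_{n,p}^{(k)}$ inside the error term; all of this is supplied cleanly by the preparatory material, so beyond this bookkeeping there is no substantive difficulty.
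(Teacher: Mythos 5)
Your proposal is correct and follows essentially the same route as the paper: decompose $N_{n,p}^Q(B,x)$ as the sum of the independent indicators $\zeta_{B,i,p}^Q$, dispose of the case of bounded $Q_{n,p}^{(k)}$ by Borel--Cantelli, and in the divergent case apply \refc{iteratedlog2} together with the two-sided bounds of \refl{tnp} to trade $t_{n,p}^{(k)}$ for $Q_{n,p}^{(k)}$ in the error term. Your extra remark that ``infinite in limit'' is needed so that all but finitely many positions admit the digits of $B$ (whence $\sum_i F_{ik+p}^{(k)}=Q_{n,p}^{(k)}+O(1)$) is a point the paper passes over silently, but it does not change the argument.
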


\begin{proof}
We consider two cases.  The first case is when $\lim_{n \to \infty} Q_{n,p}^{(k)}<\infty$. We see that
$$
\lim_{n \to \infty} Q_{n,p}^{(k)}=\lim_{n \to \infty} \sum_{i=0}^{\rho(n,k)} \Prob{\zeta_{B,i,p}^Q=1}< \infty,
$$
so by \reft{BorelCantelli}, we have $\Prob{\zeta_{B,i,p}^Q=1 \hbox{ i.o. }}=0$.  Thus, for almost every $x \in [0,1)$, $\lim_{n \to \infty} N_{n,p}^Q(B,x) < \infty$ and \refeq{lemmastronglynormal1} holds.

Second, we consider the case where $\lim_{n \to \infty} Q_{n,p}^{(k)}=\infty$. By \refl{tnp}, we have
$\lim_{n \to \infty} t_{n,p}^{(k)} \geq \lim_{n \to \infty} Q_{n,p}^{(k)}=\infty$. Note that
$$
N_{n,p}^Q(B,x)=\sum_{i=0}^{\rho(n,k)} \zeta_{B,i,p}(x).
$$
By \refc{iteratedlog2},
$$
N_{n,p}^Q(B,x)=\sum_{i=0}^{\rho(n,k)} F_{ik+p}^{(k)}+O\left(\sqrt{t_{n,p}^{(k)}} \left( \log \log t_{n,p}^{(k)} \right)^{1/2} \right)
$$
for almost every $x \in [0,1)$.  By \refl{tnp}, $t_{n,p}^{(k)} < Q_{n,p}^{(k)}$, so the lemma follows.
\end{proof}

\refl{lemmastronglynormal} allows us to prove the following results on strongly normal numbers:

%420 where do we use infinite in limit here?
\begin{theorem}\labt{aeQsnok}
Suppose that $Q$ is strongly $k$-divergent and infinite in limit.  Then almost every $x \in [0,1)$ is strongly $Q$-normal of order $k$.
\end{theorem}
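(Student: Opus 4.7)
Plan.

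The strategy is to invoke \refl{lemmastronglynormal} for every triple $(m, p, B)$ with $1 \le m \le k$, $p \in [1,m]$, and $B$ a block of length $m$, then take a countable intersection of the resulting full-measure sets. For each such triple, the lemma furnishes a Lebesgue-null exceptional set $\mathcal{N}_{m,p,B}$ outside of which
$$
N_{n,p}^Q(B,x) = Q_{n,p}^{(m)} + O\Big(\sqrt{Q_{n,p}^{(m)}}\,(\log \log Q_{n,p}^{(m)})^{1/2}\Big).
$$
Although the digit alphabet is unbounded (since $Q$ is infinite in limit), the collection of all finite blocks with non-negative integer entries is countable; combined with the finitely many pairs $(m,p)$ this yields a countable family of triples, so the union $\mathcal{N} = \bigcup_{m,p,B}\mathcal{N}_{m,p,B}$ is null. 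On the full-measure complement of $\mathcal{N}$ the asymptotic holds simultaneously for every triple.

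To upgrade the asymptotic to the ratio condition $N_{n,p}^Q(B,x)/Q_{n,p}^{(m)} \to 1$ required by the definition of strongly $Q$-normal of order $k$, it suffices to verify $Q_{n,p}^{(m)} \to \infty$ for every $m \in [1,k]$ and $p \in [1,m]$. For $m = k$ this is exactly the strong $k$-divergence hypothesis. For $m < k$ one leverages infinite-in-limit through the identity
$$
\frac{1}{q_s q_{s+1}\cdots q_{s+m-1}} = q_{s+m}q_{s+m+1}\cdots q_{s+k-1} \cdot \frac{1}{q_s q_{s+1}\cdots q_{s+k-1}},
$$
where the factor $q_{s+m}\cdots q_{s+k-1}$ tends to infinity as $s \to \infty$. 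Summing along positions $s \equiv p \pmod{m}$ and decomposing these by residue modulo $\mathrm{lcm}(m,k)$, one relates $Q_{n,p}^{(m)}$ to partial sums associated to strong $k$-divergence and deduces its divergence.

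The main obstacle is this last divergence verification: the arithmetic progressions modulo $m$ and modulo $k$ do not align neatly, so positions $s \equiv p \pmod{m}$ visit several residue classes modulo $k$, and one needs strong $k$-divergence for at least one such class to contribute an unbounded subsum of $1/(q_s \cdots q_{s+k-1})$-terms. The multiplicative blow-up $q_{s+m}\cdots q_{s+k-1}\to\infty$ supplied by infinite-in-limit then transfers that divergence to $Q_{n,p}^{(m)}$. Once this is granted, the rest of the proof — forming the countable union of null sets and extracting the ratio limit from the error estimate — is routine.
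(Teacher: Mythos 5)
You have correctly reproduced the paper's architecture: apply \refl{lemmastronglynormal} to each of the countably many triples $(m,p,B)$, intersect the resulting full-measure sets, and convert the error term into the ratio limit by showing $Q_{n,p}^{(m)}\to\infty$. At the step you isolate, the paper's own proof simply asserts ``$Q$ is strongly $k$-divergent, so $Q_{n,p}^{(m)}\to\infty$,'' so you have put your finger on the only non-routine point. The difficulty is that your proposal does not close it: the paragraph ending ``Once this is granted'' concedes that the divergence of $Q_{n,p}^{(m)}$ for $m<k$ is still unproved, and the mechanism you sketch does not supply it. Strong $k$-divergence controls $\sum 1/(q_s\cdots q_{s+k-1})$ over the classes $s\equiv p'\pmod k$, while the progression $s\equiv p\pmod m$ meets each such class only in a sub-progression of modulus $\mathrm{lcm}(m,k)$, and nothing in the hypotheses forces any of those sub-progressions to carry a divergent subsum; the blow-up $q_{s+m}\cdots q_{s+k-1}\to\infty$ cannot manufacture divergence of $Q_{n,p}^{(m)}=\sum_{s\equiv p\,(m)}1/(q_s\cdots q_{s+m-1})$, which is a quantity one can prescribe directly.

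The step can be salvaged when $k\geq 2m-1$: a length-$m$ window starting at $s$ sits inside the $k-m+1$ length-$k$ windows starting at $s-t$, $0\leq t\leq k-m$, and since $k-m+1\geq m$ these starting positions, taken over all $s\equiv p\pmod m$, cover every sufficiently large integer, giving $Q_{n,p}^{(m)}\geq \frac{1}{k-m+1}\left(Q_n^{(k)}+O(1)\right)\to\infty$ from ordinary $k$-divergence alone. But for $k<2m-1$ the needed implication appears to be false. Take $k=4$, $m=3$, $p=1$, and build $Q$ from tiles $\{3j+1,3j+2,3j+3\}$: on a set $J$ of indices containing no two consecutive integers but having positive density in each residue class mod $4$, set $q_{3j+1}=j^2+2$ and $q_{3j+2}=q_{3j+3}=\max(2,\lfloor j^{1/8}\rfloor)$, set $q_{3j+4}=q_{3j+5}=\max(2,\lfloor (j+1)^{1/8}\rfloor)$ and $q_{3j+6}=(j+1)^2+2$, and put $q=j+2$ on all remaining tiles. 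Then $q_n\to\infty$; every tile product is at least $j^2$, so $Q_{n,1}^{(3)}$ converges; yet the length-$4$ windows $\{3j+2,\ldots,3j+5\}$ for $j\in J$ avoid both large entries, contribute terms of order $j^{-1/2}$, and their starting positions sweep out all four classes mod $4$ as the class of $j$ varies, so $Q$ is strongly $4$-divergent. For such $Q$, Borel--Cantelli shows the order-$3$ ratio condition (which is part of strong $Q$-normality of order $4$ as defined) fails on a set of positive measure. So the gap is not merely unfilled but cannot be filled as stated: one must either assume strong $m$-divergence for every $m\leq k$ (under which your argument, and the paper's, goes through verbatim) or restrict the definition of strong normality of order $k$ to blocks of length exactly $k$.
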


\begin{proof}
Let $B$ be a block of length $m \leq k$ and $p \in [1,m]$.  Then by \refl{lemmastronglynormal}, for almost every $x \in [0,1)$, we have that
$$
N_{n,p}^Q(B,x)=Q_{n,p}^{(m)}+O\left(\sqrt{Q_{n,p}^{(m)}} \left( \log \log Q_{n,p}^{(m)} \right)^{1/2} \right) \hbox{, so}
$$
$$
\frac {N_{n,p}^Q(B,x)} {Q_{n,p}^{(m)}}=1+O\left(\frac {\sqrt{Q_{n,p}^{(m)}} \left( \log \log Q_{n,p}^{(m)} \right)^{1/2}} {Q_{n,p}^{(m)}} \right).
$$
However, $Q$ is strongly $k$-divergent, so $Q_{n,p}^{(m)} \to \infty$ and
$$
\lim_{n \to \infty} \frac {N_{n,p}^Q(B,x)} {Q_{n,p}^{(m)}}=\lim_{n \to \infty} \left( 1+O\left(\frac {\sqrt{Q_{n,p}^{(m)}} \left( \log \log Q_{n,p}^{(m)} \right)^{1/2}} {Q_{n,p}^{(m)}} \right) \right)=1.
$$
Since there are finitely many choices of $m$ and $p$ and only countably many choices of $B$, the result follows.
\end{proof}

\begin{corollary}\labc{aeQsn}
If $Q$ is strongly fully divergent and infinite in limit, then almost every real $x \in [0,1)$ is strongly $Q$-normal.
\end{corollary}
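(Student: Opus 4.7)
The plan is to deduce this corollary from \reft{aeQsnok} by a standard countable union argument. Since $Q$ is strongly fully divergent, it is strongly $k$-divergent for every positive integer $k$, and since $Q$ is also infinite in limit, \reft{aeQsnok} applies for each such $k$.

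More concretely, for each positive integer $k$, let $A_k$ denote the set of $x \in [0,1)$ that fail to be strongly $Q$-normal of order $k$. By \reft{aeQsnok}, $\la(A_k) = 0$ for every $k$. Set $A = \bigcup_{k=1}^{\infty} A_k$; by countable subadditivity of Lebesgue measure, $\la(A) = 0$. Any $x \in [0,1) \setminus A$ is strongly $Q$-normal of order $k$ for every $k$, which by definition means $x$ is strongly $Q$-normal.

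There is no real obstacle here: the work has been done in \reft{aeQsnok}, where finiteness of the choices of $m \le k$, $p \in [1,m]$, and countability of the choices of $B$ were already absorbed inside the statement for a fixed $k$. The only additional ingredient needed to pass from ``for each $k$, almost every $x$ is strongly $Q$-normal of order $k$'' to ``almost every $x$ is strongly $Q$-normal of order $k$ for all $k$'' is the countable union argument described above. The proof should therefore be essentially one sentence invoking \reft{aeQsnok} and countable subadditivity.
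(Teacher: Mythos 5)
Your proof is correct and matches the paper's (implicit) argument: the corollary is stated without proof precisely because it follows from \reft{aeQsnok} by the countable union of null sets over all $k$, exactly as you describe.
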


We now work towards proving a result much stronger than \refc{aeQsn} on the typicality of $Q$-normal numbers.  We will need the following lemma in addition to \refl{lemmastronglynormal}:

\begin{lemma}\labl{bigOineq}
 If $Q$ is a basic sequence and $k$ and $p$ are positive integers with $p \in [1,k]$, then
$$
\sum_{p=1}^k \left( Q_{n,p}^{(k)}+O\left(\sqrt{Q_{n,p}^{(k)}} \left(\log \log Q_{n,p}^{(k)} \right)^{1/2} \right) \right) = \qnk + O\left(\sqrt{\qnk} \left(\log \log \qnk \right)^{1/2} \right).
$$
\end{lemma}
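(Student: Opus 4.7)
The plan is to decouple the main term from the error term and then use monotonicity to absorb the error terms into the global estimate. By the unnamed lemma at the start of Section 2, $\sum_{p=1}^k Q_{n,p}^{(k)} = \qnk + O(1)$, so the main terms on the two sides of the asserted identity already agree up to a constant, which is harmless since $O(1)$ is swallowed by $O\bigl(\sqrt{\qnk}(\log\log\qnk)^{1/2}\bigr)$ once $\qnk$ is large (and when $\qnk$ is bounded, the whole claim is trivial). So the real content is to show
$$
\sum_{p=1}^{k} \sqrt{Q_{n,p}^{(k)}}\,\bigl(\log\log Q_{n,p}^{(k)}\bigr)^{1/2}
\;=\; O\!\left(\sqrt{\qnk}\,(\log\log\qnk)^{1/2}\right).
$$

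To do this I would exploit the fact that every term $Q_{n,p}^{(k)}$ is non-negative and, by the same basic identity, satisfies $Q_{n,p}^{(k)} \le \qnk + O(1)$. Consequently $\sqrt{Q_{n,p}^{(k)}} = O\bigl(\sqrt{\qnk}\bigr)$, and provided $Q_{n,p}^{(k)} \to \infty$ one also has $\log\log Q_{n,p}^{(k)} \le \log\log(\qnk + O(1)) = \log\log \qnk + O(1)$, whence $(\log\log Q_{n,p}^{(k)})^{1/2} = O\bigl((\log\log\qnk)^{1/2}\bigr)$. Each of the $k$ summands is therefore $O\bigl(\sqrt{\qnk}(\log\log\qnk)^{1/2}\bigr)$, and since $k$ is a fixed constant (independent of $n$) summing the $k$ contributions preserves this bound.

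The one place that needs a sentence of care is what happens when certain $Q_{n,p}^{(k)}$ stay bounded while others diverge: for those $p$, the corresponding $O$-term in the hypothesis is simply $O(1)$ by definition (the asymptotic notation is vacuous for bounded quantities), which is absorbed into the global error. I would split the index set $\{1,\ldots,k\}$ accordingly, handle the bounded indices by trivial absorption, and apply the monotonic bound above to the unbounded indices. I do not expect any genuine obstacle; the entire argument is bookkeeping with the identity $\sum_p Q_{n,p}^{(k)} = \qnk + O(1)$ together with the monotonicity of $x \mapsto \sqrt{x}\,(\log\log x)^{1/2}$ for large $x$, and the fact that $k$ is a constant.
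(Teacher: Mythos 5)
Your proposal is correct and follows essentially the same route as the paper: the paper likewise establishes $\sum_{p=1}^k Q_{n,p}^{(k)} = \qnk + o(1)$ for the main term and then bounds the error sum via $\sum_{p=1}^k \sqrt{Q_{n,p}^{(k)}}\,(\log\log Q_{n,p}^{(k)})^{1/2} \le k\,\sqrt{\sum_{p} Q_{n,p}^{(k)}}\,(\log\log \sum_{p} Q_{n,p}^{(k)})^{1/2}$, which is exactly your term-by-term monotonicity argument with the constant factor $k$. Your extra care about indices $p$ for which $Q_{n,p}^{(k)}$ stays bounded is a reasonable refinement the paper leaves implicit, but it does not change the argument.
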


\begin{proof}
We first note that
$\sum_{p=1}^k Q_{n,p}^{(k)} \leq Q_n^{(k)}+\left( Q_n^{(k)}-Q_{n-k}^{(k)} \right)$.
Since $ Q_n^{(k)}-Q_{n-k}^{(k)} \leq (k+1)2^{-k} \to 0$, we see that
\begin{equation}\labeq{iso1}
\sum_{p=1}^k Q_{n,p}^{(k)} = Q_n^{(k)}+o(1).
\end{equation}
Next, note that
\begin{equation}\labeq{nastyineq}
\sum_{p=1}^k \sqrt{Q_{n,p}^{(k)}} \left(\log \log Q_{n,p}^{(k)} \right)^{1/2} \leq k \sqrt{\sum_{p=1}^k Q_{n,p}^{(k)}} \left( \log \log \sum_{p=1}^k Q_{n,p}^{(k)} \right)^{1/2}.
\end{equation}
By \refeq{iso1} and \refeq{nastyineq},
\begin{equation}\labeq{firstbigo}
\sum_{p=1}^k O\left(\sqrt{Q_{n,p}^{(k)}} \left(\log \log Q_{n,p}^{(k)} \right)^{1/2} \right)=O\left(\sqrt{\qnk} \left(\log \log \qnk \right)^{1/2} \right).
\end{equation}
Thus, the lemma follows by combining \refeq{iso1} and \refeq{firstbigo}.
\end{proof}

\begin{theorem}\labt{typicalasymptotics}
If $Q$ is a basic sequence that is infinite in limit and $B$ is a block of length $k$, then for almost every real number $x$ in $[0,1)$, we have
$$
N_n^Q(B,x)=\qnk+O\left(\sqrt{\qnk} \left(\log \log \qnk \right)^{1/2} \right).
$$
\end{theorem}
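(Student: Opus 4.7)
The plan is to reduce the global statement to the positional statements already proved in \refl{lemmastronglynormal} by summing over the $k$ residue classes mod $k$ and then invoking \refl{bigOineq} to collapse the sum of error terms into a single clean error term.

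More precisely, fix the block $B$ of length $k$. First I would apply \refl{lemmastronglynormal} to each of the finitely many values $p \in [1,k]$. Each application produces a full-measure set $A_p \subseteq [0,1)$ on which
$$
N_{n,p}^Q(B,x) = Q_{n,p}^{(k)} + O\!\left(\sqrt{Q_{n,p}^{(k)}}\left(\log\log Q_{n,p}^{(k)}\right)^{1/2}\right).
$$
Since $k$ is finite, the intersection $A = \bigcap_{p=1}^k A_p$ still has full Lebesgue measure, and on $A$ all $k$ asymptotics hold simultaneously.

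Next I would use the first decomposition lemma of Section 2, which gives
$$
N_n^Q(B,x) = \sum_{p=1}^k N_{n,p}^Q(B,x) + O(1).
$$
Summing the $k$ asymptotics from the previous step and substituting into this identity yields
$$
N_n^Q(B,x) = \sum_{p=1}^k Q_{n,p}^{(k)} + \sum_{p=1}^k O\!\left(\sqrt{Q_{n,p}^{(k)}}\left(\log\log Q_{n,p}^{(k)}\right)^{1/2}\right) + O(1)
$$
for every $x \in A$. At this point \refl{bigOineq} applies verbatim to the right-hand side, converting it into $\qnk + O\!\left(\sqrt{\qnk}(\log\log \qnk)^{1/2}\right)$, which absorbs the $O(1)$ term and gives the desired conclusion.

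There is essentially no hard step here: every required ingredient has been proved in the preceding two sections. The only point needing any care is the case distinction hidden inside \refl{lemmastronglynormal}: when $\lim_n Q_{n,p}^{(k)} < \infty$ for some residue classes $p$, the Borel-Cantelli half of that lemma gives $N_{n,p}^Q(B,x) = O(1)$ on a full-measure set, which still fits the stated bound since $\sqrt{Q_{n,p}^{(k)}}(\log\log Q_{n,p}^{(k)})^{1/2}$ is then $O(1)$ as well. Because the claim makes no assumption of $k$-divergence on $Q$, I would make this case explicit when citing \refl{lemmastronglynormal}, but otherwise the argument is a direct assembly of the preceding lemmas.
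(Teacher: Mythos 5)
Your proposal is correct and follows essentially the same route as the paper: decompose $N_n^Q(B,x)$ into the positional counts $N_{n,p}^Q(B,x)$, apply \refl{lemmastronglynormal} for each $p\in[1,k]$ on the intersection of the corresponding full-measure sets, and collapse the sum of error terms via \refl{bigOineq}. Your extra remark about the bounded case of $Q_{n,p}^{(k)}$ is a reasonable clarification but does not change the argument.
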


\begin{proof}
We first note that
\begin{equation}\labeq{bong}
 N_n^Q(B,x)=\sum_{p=1}^k N_{n,p}(B,x)+O(1).
\end{equation}
Thus, by \refeq{bong} and \refl{lemmastronglynormal}, for almost every $x \in [0,1)$, we have
\begin{equation}\labeq{bong2}
 N_n^Q(B,x)=\sum_{p=1}^k \left( Q_{n,p}^{(k)}+O\left(\sqrt{Q_{n,p}^{(k)}} \left(\log \log Q_{n,p}^{(k)} \right)^{1/2} \right) \right) +O(1).
\end{equation}
Thus, the theorem follows by applying \refl{bigOineq} to \refeq{bong2}.
\end{proof}

We recall the following standard result on infinite products:

\begin{lemma}\labl{infiniteproduct}
If $\{a_n\}_{n=1}^{\infty}$ is a sequence of real numbers such that $0 \leq a_n <1$ for all $n$,  then the infinite product $\prod_{n=1}^{\infty} (1-a_n)$ converges if and only if the sum $\sum_{n=1}^{\infty} a_n$ is convergent.
\end{lemma}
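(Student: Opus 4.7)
The plan is to pass to logarithms and reduce the convergence of $\prod (1-a_n)$ to that of the series $\sum -\log(1-a_n)$, then compare the latter termwise with $\sum a_n$. Since each factor $1-a_n$ lies in $(0,1]$, the partial products $P_N = \prod_{n=1}^N (1-a_n)$ are positive and non-increasing, so $\lim_{N \to \infty} P_N$ always exists in $[0,1]$; thus the product converges (in the sense of having a strictly positive limit) if and only if $\sum_{n=1}^{\infty} -\log(1-a_n)$ converges, because $\log P_N = \sum_{n=1}^N \log(1-a_n)$.

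For the forward implication ($\sum a_n < \infty$ implies convergence of the product), I would use that $a_n \to 0$, pick $N_0$ with $a_n \leq 1/2$ for $n \geq N_0$, and note that the Taylor expansion gives
\begin{equation*}
0 \leq -\log(1-a_n) = \sum_{j=1}^{\infty} \frac{a_n^j}{j} \leq a_n \sum_{j=0}^{\infty} 2^{-j} = 2 a_n
\end{equation*}
for such $n$. The tail of $\sum -\log(1-a_n)$ is therefore dominated by $2 \sum_{n \geq N_0} a_n < \infty$, and the finitely many initial terms are each finite because $a_n < 1$; hence the log-series, and so the product, converges.

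For the converse, I would apply the elementary inequality $1-x \leq e^{-x}$, valid for all real $x$, which yields
\begin{equation*}
P_N = \prod_{n=1}^N (1-a_n) \leq \exp\!\left(-\sum_{n=1}^N a_n\right).
\end{equation*}
If $\sum a_n = \infty$, the right-hand side tends to $0$, forcing $P_N \to 0$ and so precluding convergence to a positive limit.

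The statement is a textbook fact and no real obstacle arises. The only point deserving care is the convention that ``convergence'' of the infinite product means convergence to a strictly positive value: since the monotone partial products always converge in $[0,1]$, allowing the zero limit would make every such product convergent and render the lemma vacuous, so the intended meaning must be the nonzero-limit one, which is also what subsequent applications will require.
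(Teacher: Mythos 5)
The paper states this lemma as a standard fact and gives no proof at all, so there is nothing to compare against; your argument is the classical one and it is correct. Both directions check out: the bound $-\log(1-a_n)\le 2a_n$ for $a_n\le 1/2$ (via the Taylor series) handles the forward implication, and $1-x\le e^{-x}$ forces the partial products to $0$ when $\sum a_n$ diverges. Your closing remark about the convention is also the right one to make explicit: ``convergence'' of the product must mean convergence to a strictly positive limit (the monotone partial products always have a limit in $[0,1]$), and that is exactly the sense in which the lemma is used in the paper's proof of Theorem 4.10, where the conclusion $\lambda(V)>0$ is what matters.
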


\begin{theorem}\labt{aeQnok}
Suppose that $Q$ is a basic sequence that is infinite in limit.  Then almost every real number in $[0,1)$ is $Q$-normal of order $k$ if and only if $Q$ is $k$-divergent.
\end{theorem}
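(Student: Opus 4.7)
I would prove the two directions of the biconditional separately.

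For the direction ``$k$-divergence $\implies$ almost every $x$ is $Q$-normal of order $k$,'' fix a block $B$ of length $k$. \reft{typicalasymptotics} supplies a full-measure set $S_B\subseteq[0,1)$ on which
$$
N_n^Q(B,x)=\qnk+O\!\left(\sqrt{\qnk}\,\bigl(\log\log\qnk\bigr)^{1/2}\right).
$$
Because $Q$ is $k$-divergent, $\qnk\to\infty$; the error term is $o(\qnk)$, so dividing gives $N_n^Q(B,x)/\qnk\to 1$ on $S_B$. Since there are only countably many blocks of length $k$, the intersection $S=\bigcap_B S_B$ still has full Lebesgue measure, and every $x\in S$ is $Q$-normal of order $k$.

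For the reverse direction I would argue the contrapositive. Suppose $Q$ is $k$-convergent, so $L:=\lim_n\qnk$ is finite, and $L>0$ since $\qnk\ge 1/(q_1 q_2\cdots q_k)$. I will show that the set of $Q$-normal-of-order-$k$ numbers has Lebesgue measure strictly less than $1$, working with the single block $B_0=(0,0,\ldots,0)$ of length $k$. Its expected number of starting occurrences up to position $n$ is exactly $\qnk\to L$, so $\sum_{j\ge 1}\Prob{B_0 \text{ starts at position } j}=L<\infty$. By the Borel--Cantelli lemma (\reft{BorelCantelli}), $N^Q(B_0,x):=\lim_n N_n^Q(B_0,x)$ is a finite non-negative integer for almost every $x$. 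For such an $x$ to be $Q$-normal of order $k$ we would need $N^Q(B_0,x)=L$ exactly. If $L$ is not a non-negative integer, this has probability zero and we are done.

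The main obstacle is the case in which $L$ happens to be a positive integer, where I must rule out the possibility that $N^Q(B_0,x)$ is almost surely equal to $L$. I would do this by conditioning on the positive-probability event $\{E_2(x)=\cdots=E_k(x)=0\}$. On that event the indicator that $B_0$ starts at position $1$ reduces to $\mathbf{1}[E_1(x)=0]$, while each indicator that $B_0$ starts at a position $j\ge 2$ is a function of $(E_2,E_3,\ldots)$ alone and is therefore independent of $E_1(x)$ by \refl{Eareindependent}. Conditioning further on the sum of those tail indicators exhibits $N^Q(B_0,x)$ taking two distinct integer values, each with positive conditional probability, so $N^Q(B_0,x)$ is not almost surely constant. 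Hence $\Prob{N^Q(B_0,x)=L}<1$, the set of $Q$-normal-of-order-$k$ numbers has measure less than $1$, and the almost-every hypothesis fails, completing the contrapositive.
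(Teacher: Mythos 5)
Your forward direction is exactly the paper's: apply \reft{typicalasymptotics}, observe that the error term is $o\left(\qnk\right)$ once $\qnk\to\infty$, and intersect over the countably many blocks of length $k$. Your converse, however, takes a genuinely different route. The paper also singles out the all-zeros block, but it argues that the set $V$ of numbers whose expansion never contains that block has positive measure, writing $\la(V)$ as the infinite product $\prod_{n}\left(1-\frac{1}{q_nq_{n+1}\cdots q_{n+k-1}}\right)$ and invoking \refl{infiniteproduct}; since $\qnk\to L>0$ while $N_n^Q(B,x)\equiv 0$ on $V$, no point of $V$ is $Q$-normal of order $k$. You instead show that the a.s.-finite limit $N^Q(B_0,x)$ cannot be almost surely equal to the constant $L$: Borel--Cantelli gives finiteness, the non-integer case is immediate, and in the integer case you condition on $E_2=\cdots=E_k=0$ and use the independence of $E_1$ from the tail indicators to exhibit two values of the count, each taken with positive probability. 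Your version is longer but arguably more robust: for $k\ge 2$ the events ``$B_0$ starts at position $n$'' overlap and are not mutually independent, so the paper's product formula is not literally an identity for $\la(V)$ (it is exact only for $k=1$), whereas your conditioning argument requires no independence between overlapping occurrences; it also cleanly dispatches the case where $L$ is not an integer. The one detail you should make explicit is that some value $t$ of the tail count occurs with positive probability jointly with the conditioning event --- this follows from the almost sure finiteness you already established, since the conditioning event has positive probability and depends only on finitely many digits. With that said, both proofs correctly reduce the failure of typicality to the behavior of a single block.
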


\begin{proof}
First, we suppose that $Q$ is $k$-divergent.  Then by \reft{typicalasymptotics}, for almost every $x \in [0,1)$, we have
$$
\lim_{n \to \infty} \frac {N_n^Q(B,x)} {\qnk}=\lim_{n \to \infty} \frac {\qnk+O\left(\sqrt{\qnk} \left(\log \log \qnk \right)^{1/2} \right)} {\qnk}=1.
$$
We now suppose that $Q$ is $k$-convergent.We will now use similar reasoning to that found in \cite{Renyi}.  Set
$B=(0,0,\ldots,0)$ ($k$ zeros).
We will show that the set of real numbers in $[0,1)$ whose $Q$-Cantor series expansion does not contain the block $B$ has positive measure.  Call this set $V$.  We see that
$$
\la(V)=\prod_{n=1}^{\infty} \left(1-\frac {1} {q_nq_{n+1}\cdots q_{n+k-1}} \right).
$$
Set $a_n=q_nq_{n+1}\cdots q_{n+k-1}$.
Since $Q$ is $k$-convergent, we have $\sum a_n < \infty$.  Thus, $\la(V)>0$ by \refl{infiniteproduct}.
\end{proof}

\begin{corollary}\labc{aeQn}
Suppose that $Q$ is a basic sequence that is infinite in limit.  Then almost every real number in $[0,1)$ is $Q$-normal  if and only if $Q$ is fully divergent.
\end{corollary}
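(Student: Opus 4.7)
The plan is to deduce this directly from \reft{aeQnok} by a routine countable-intersection argument, using the definition of $Q$-normal as $Q$-normal of order $k$ for every positive integer $k$.

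For the forward implication, suppose $Q$ is fully divergent. Then $Q$ is $k$-divergent for every positive integer $k$, so \reft{aeQnok} furnishes, for each $k$, a full-measure set $A_k \subseteq [0,1)$ whose elements are $Q$-normal of order $k$. I would then set $A = \bigcap_{k=1}^{\infty} A_k$; since a countable intersection of sets of full Lebesgue measure has full measure, $\la(A)=1$, and by construction each $x \in A$ is $Q$-normal of order $k$ for all $k$, i.e.\ $Q$-normal.

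For the converse, suppose $Q$ is not fully divergent. Then there exists some $k$ for which $Q$ is $k$-convergent. By \reft{aeQnok}, the set of $x \in [0,1)$ that are $Q$-normal of order $k$ fails to have full Lebesgue measure. Since every $Q$-normal number is in particular $Q$-normal of order $k$, the set of $Q$-normal numbers is contained in this set and therefore also fails to have full measure, so it is not the case that almost every $x \in [0,1)$ is $Q$-normal.

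There is no real obstacle here: the substantive content is packaged in \reft{aeQnok}, and the corollary is only the standard quantifier swap "for a.e.\ $x$, for every $k$" versus "for every $k$, for a.e.\ $x$," which is legitimate because the range of $k$ is countable. Proposition~\ref{proposition:fewnormal2} would in fact give the stronger conclusion that the set of $Q$-normal numbers is empty in the converse direction, but this is not needed for the corollary as stated.
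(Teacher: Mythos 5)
Your proposal is correct and is exactly the standard derivation the paper intends (it states \refc{aeQn} without proof as an immediate consequence of \reft{aeQnok}): a countable intersection of full-measure sets for the forward direction, and containment in a non-full-measure set for the converse. Nothing further is needed.
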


\section{Ratio normal numbers}

We are now in a position to compare the prevelance of $Q$-normal numbers to $Q$-ratio normal numbers, depending on properties of the basic sequence $Q$.  In particular, we will show that if $Q$ is infinite in limit, then the set of $Q$-ratio normal numbers is dense in $[0,1)$ even though the set of $Q$-normal numbers may be empty. Suppose that $Q$ is a $k$-convergent basic sequence and define
\begin{equation}
Q_{\infty}^{(k)}=\lim_{n \rightarrow \infty} \qnk<\infty.
\end{equation}

\begin{proposition}\labp{fewnormal2}
If $Q$ is a basic sequence that is $k$-convergent for some $k$, then the set of $Q$-normal numbers is empty.
\end{proposition}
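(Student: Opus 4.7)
The plan is to argue by contradiction: assume some $x \in [0,1)$ is $Q$-normal, and then derive a counting contradiction by combining the normality constraints at orders $k$ and $k+1$. The preliminary observation is that $Q$-convergence at order $k$ implies $Q$-convergence at order $k+1$: since every $q_n \geq 2$, one has $Q_n^{(k+1)} \leq \frac{1}{2} Q_n^{(k)}$, so $Q_\infty^{(k+1)} \leq \frac{1}{2} Q_\infty^{(k)} < \infty$, while on the other hand $Q_\infty^{(k+1)} \geq 1/(q_1 q_2 \cdots q_{k+1}) > 0$.

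Let $B = (0,0,\ldots,0)$ be the length-$k$ all-zero block, and for each integer $a \geq 0$ let $B_a = (0,0,\ldots,0,a)$ be the length-$(k+1)$ block obtained by appending $a$. The crucial combinatorial identity is
$$
\sum_{a=0}^{\infty} N_n^Q(B_a, x) = N_n^Q(B, x) \hbox{ for every } n,
$$
because each occurrence of $B$ at a starting position $j \leq n$ is followed by a unique digit $E_{j+k}$, which determines the single block $B_a$ to whose count that occurrence contributes.

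Next, I will exploit the two normality conditions. From $Q$-normality of order $k$, the relation $N_n^Q(B, x)/Q_n^{(k)} \to 1$ together with $Q_n^{(k)} \to Q_\infty^{(k)} < \infty$ shows that the non-decreasing integer sequence $N_n^Q(B, x)$ is bounded above, so there is a non-negative integer $M$ with $N_n^Q(B, x) \leq M$ for all $n$. From $Q$-normality of order $k+1$, for each $a \geq 0$ the relation $N_n^Q(B_a, x)/Q_n^{(k+1)} \to 1$ combined with $Q_n^{(k+1)} \to Q_\infty^{(k+1)} > 0$ forces $N_n^Q(B_a, x) \geq 1$ for all sufficiently large $n$ (depending on $a$). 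Selecting a finite set $A \subset \{0,1,2,\ldots\}$ with $|A| = M+1$ and then picking $n$ large enough that $N_n^Q(B_a, x) \geq 1$ for every $a \in A$, the identity yields $M \geq N_n^Q(B, x) \geq \sum_{a \in A} N_n^Q(B_a, x) \geq M+1$, which is the desired contradiction. I do not expect a serious obstacle here: the combinatorial identity is immediate from the definition of $N_n^Q$, and both the upper bound on $N_n^Q(B, x)$ and the eventual positivity of $N_n^Q(B_a, x)$ follow at once from the hypothesized normality together with the finiteness and positivity of the limits $Q_\infty^{(k)}$ and $Q_\infty^{(k+1)}$.
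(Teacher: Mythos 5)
Your proof is correct, but it reaches the contradiction by a different mechanism than the paper. The paper iterates the halving bound $Q_\infty^{(k+1)} \leq \frac{1}{2} Q_\infty^{(k)}$ until it finds an order $k' > K$ with $Q_\infty^{(k')} < 1$; then for a putative $Q$-normal $x$ the non-decreasing integer sequence $N_n^Q(B,x)$ would have to converge to $Q_\infty^{(k')} \in (0,1)$, forcing $N_n^Q(B,x) \equiv 0$ for every block $B$ of length $k'$ --- impossible, since some length-$k'$ block certainly occurs in the expansion of $x$. You instead apply the halving bound only once, to pass from $k$-convergence to $(k+1)$-convergence, and then play the two orders off against each other via the exact identity $\sum_{a \geq 0} N_n^Q(B_a,x) = N_n^Q(B,x)$ and a pigeonhole count: order-$k$ normality bounds $N_n^Q(B,x)$ by some $M$, while order-$(k+1)$ normality (using $Q_\infty^{(k+1)} \geq 1/(q_1\cdots q_{k+1}) > 0$) forces each of $M+1$ chosen extensions $B_a$ to occur at least once. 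Both arguments ultimately rest on the tension between integrality of the counting functions and finiteness of $Q_\infty^{(k)}$; the paper's version is shorter, while yours is slightly more informative in that it shows no number can be $Q$-normal of orders $k$ and $k+1$ simultaneously, without needing to pass to a possibly much larger order $k'$ where $Q_\infty^{(k')}$ drops below $1$. All the individual steps you use (the exactness of the decomposition identity, boundedness of $N_n^Q(B,x)$ from convergence, and eventual positivity of each $N_n^Q(B_a,x)$) check out.
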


\begin{proof} We make the observation that since $q_n \geq 2$ for all $n$,
$Q_{\infty}^{(k)} \leq \frac {1} {2} Q_{\infty}^{(k-1)}$
for all $k$.  Thus, there exists a $K>0$ such that for all $k>K$, we have
$Q_{\infty}^{(k)} < 1$. Thus, no blocks of length $k>K$ can occur in any $Q$-normal number and the set of $Q$-normal numbers is empty.
\end{proof}

If $B=(b_1,b_2,\cdots,b_k)$ is a block of length $k$, we write $$\max(B)=\max(b_1,b_2,\cdots,b_k).$$
If $E=(E_1,E_2,\cdots)$, then set $E_{n,k}=(E_n,E_{n+1},\cdots,E_{n+k-1})$.

\begin{proposition}\labp{existsrationormal}
If $Q=\{q_n\}_{n=1}^{\infty}$ is  infinite in limit, then there exists a real number that is $Q$-ratio normal.
\end{proposition}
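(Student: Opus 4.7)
My plan is to construct $x$ explicitly by a concatenation strategy similar to the one used in \reft{construction}. Since $Q$ is infinite in limit, I can fix an increasing sequence of integers $p_i \to \infty$ together with indices $N_1 < N_2 < \cdots$ such that $q_n \ge p_i$ for every $n \ge N_i$. For each $i$, set $k_i = i$, $w_i = (2i+1)^2$, $\epsilon_i = k_i/w_i$, and $x_i = C_{p_i, w_i}$, so by \refl{l4.2} the block $x_i$ is $(\epsilon_i, k_i, \lambda_{p_i})$-normal and has all digits in $\{0, 1, \ldots, p_i - 1\}$. Then build the digit sequence $E = E_1 E_2 \cdots$ by concatenating $l_i$ copies of $x_i$ one segment at a time (padding with zeros where necessary so that the $i$-th segment starts at some position $\ge N_i$). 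Because $E_n < p_i \le q_n$ throughout the $i$-th segment, $E$ is a legitimate $Q$-Cantor series expansion of some $x \in [0,1)$.

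To verify ratio normality, fix a positive integer $k$ and two blocks $B, B'$ of length $k$, and let $M = 1 + \max(\max(B), \max(B'))$. Choose $i_0$ large enough that $p_i > M$ and $k_i \ge k$ for all $i \ge i_0$; then both $B$ and $B'$ are blocks in base $p_i$ and the $(\epsilon_i, k_i, \lambda_{p_i})$-normality of $x_i$ gives
$$
(1 - \epsilon_i)\, p_i^{-k} |x_i| \;\le\; N(B, x_i),\ N(B', x_i) \;\le\; (1 + \epsilon_i)\, p_i^{-k} |x_i|.
$$
Writing $c_i(n)$ for the number of full copies of $x_i$ contained in $E_1 \cdots E_n$ and $I(n)$ for the index of the current segment, one obtains
$$
N_n^Q(B,x) \;=\; \sum_{i_0 \le i \le I(n)} c_i(n)\, p_i^{-k} |x_i|\bigl(1 + O(\epsilon_i)\bigr) \;+\; O\!\left(L_{i_0-1} + I(n) + \sum_{i_0 \le i \le I(n)} c_i(n)\right),
$$
and the identical expression with $B$ replaced by $B'$; the final $O$-term absorbs contributions from the finitely many early segments, from boundary blocks between successive copies of $x_i$, and from junctions between distinct segments.

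The remaining work is to choose the multiplicities $l_i$ growing fast enough---for instance $l_i = p_i^{10 k_i}$---so that $L_{i-1} = o(l_i |x_i| p_i^{-k})$ for every fixed $k$. Under such growth, the leading term dominates each of the $O$-errors, and equals (up to a factor $1 + O(\epsilon_i) \to 1$) the corresponding leading term for $B'$. Consequently $N_n^Q(B,x)/N_n^Q(B',x) \to 1$ for every pair of equal-length blocks, which is exactly $Q$-ratio normality. The only delicate point is the accounting for the early segments $i < i_0$, where $B$ or $B'$ may occur with very unequal (or zero) frequency; but once the $l_i$ grow fast enough to dwarf $L_{i-1}$ on every scale, this reduces to routine bookkeeping of the same flavor as the good-sequence estimates behind \reft{construction}.
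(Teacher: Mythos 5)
Your route is genuinely different from the paper's, and much heavier. The paper does not construct anything from scratch: it fixes an auxiliary basic sequence $Q'$ that is fully divergent and infinite in limit (e.g.\ $q_n'=n+1$), takes a $Q'$-normal number $x'=0.E_1'E_2'\ldots$ (which exists by \refc{aeQn}), and truncates its digits by setting $E_n=\min(E_n',q_n-1)$ to obtain a valid $Q$-Cantor digit string. For any two blocks $B,B'$ of length $k$ with entries below $l-2$, the truncation can only disturb occurrences at positions $n\le M_l$, so $N_n^Q(B,x)=N_n^{Q'}(B,x')+O(1)$, and since both $N_n^{Q'}(B,x')$ and $N_n^{Q'}(B',x')$ are asymptotic to $Q_n'^{(k)}\to\infty$, the ratio tends to $1$. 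Note that your plan cannot simply cite \reft{oldmain}, because that theorem concludes $Q$-normality, which by \refp{fewnormal2} may be impossible for the given $Q$; you would have to reprove a ratio-normal analogue of the entire $W$-good machinery.

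Beyond being laborious, your error accounting has two concrete holes. First, the term $O\bigl(L_{i_0-1}+I(n)+\sum_{i}c_i(n)\bigr)$ omits the partial final copy of $x_{I(n)}$ straddling position $n$: a truncated copy of $C_{p_i,w_i}$ can contain on the order of $p_i^{-k}|x_i|$ occurrences of $B$ and a wildly different number of occurrences of $B'$, and this quantity is not bounded by any of the terms you list. Controlling it is exactly the role of condition \refeq{good3}, and it forces $|x_i|p_i^{-k}$ to be small relative to $l_{i-1}|x_{i-1}|p_{i-1}^{-k}$, i.e.\ it constrains how fast $p_i$ may grow. Second, the zero-padding used to push segment $i$ past position $N_i$ is not neutral for \emph{ratio} normality: a padding run of length $P_i$ contributes $\sim P_i$ occurrences of the all-zero block of length $k$ and none of any other block, so you also need $P_i=o\bigl(l_{i-1}|x_{i-1}|p_{i-1}^{-k}\bigr)$ for every $k$. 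Since $N_i$ is dictated by how slowly $q_n\to\infty$, the choices of $p_i$ and $l_i$ must be made adaptively against $Q$; this is fixable, but it is precisely the non-routine part of the bookkeeping, and as written the sketch does not close it. The paper's truncation argument sidesteps all of these issues.
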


\begin{proof}
Let $Q'=\{q_n'\}_{n=1}^{\infty}$ be any fully divergent basic sequence that is infinite in limit.  Then we know that there exists a $Q'$-normal number by \refc{aeQn}.  Let $x=0.E_1'E_2'E_3'\ldots$ with respect to  $Q'$ be $Q'$-normal and let $E'=(E_1',E_2',\ldots)$.
Set $M_k=\min \{m : q_n > k \ \forall n \geq m \}$, $E_n=\min(E_n',q_n-1)$, and 
$E=(E_1,E_2,\ldots)$.  Suppose that $B$ and $B'$ are two blocks of length $k$ and let
$l=\max(\max(B),\max(B'))+2$.

Thus, if $n>M_l$, then $E'_{n,k}=B$ is equivalent to $E_{n,k}=B$ and $E'_{n,k}=B'$ is equivalent to $E_{n,k}=B'$.  Since $x$ is $Q'$-normal, there are infinitely many occurences of every block.  Additionally, $E_n \leq q_n-1$ for all $n$,  so $\formalsum$ is $Q$-ratio normal.
\end{proof}

\begin{corollary}\labc{rationormalisdense}
If $Q$ is infinite in limit, then the set of numbers that are $Q$-ratio normal is dense in $[0,1)$.
\end{corollary}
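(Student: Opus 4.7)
The plan is to use the fact that $Q$-ratio normality is a tail property of the Cantor series digit sequence, combined with \refp{existsrationormal}, to show that we can realize a $Q$-ratio normal number in any prescribed cylinder set.

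First, I would invoke \refp{existsrationormal} to fix some real number $x = 0.E_1E_2E_3\ldots$ with respect to $Q$ that is $Q$-ratio normal. Next, I would observe that for any blocks $B$ and $B'$ of length $k$, altering only the first $N$ digits of a Cantor series expansion changes $N_n^Q(B, \cdot)$ and $N_n^Q(B',\cdot)$ by at most $O(1)$ (at most $2k$ block occurrences can be affected). Since $x$ is $Q$-ratio normal, we have $N_n^Q(B',x) \to \infty$ for every block $B'$, so replacing $x$'s first $N$ digits by any admissible values preserves the limit
$$
\lim_{n \to \infty} \frac{N_n^Q(B,\cdot)}{N_n^Q(B',\cdot)} = 1.
$$
(One must also confirm the non-terminating condition $E_n \neq q_n - 1$ infinitely often is preserved, which is immediate since $x$ already has infinitely many occurrences of every digit value, in particular of any digit $\neq q_n-1$.)

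Second, I would exploit that $Q$ is infinite in limit to produce cylinders of arbitrarily small diameter. Given any open subinterval $(a,b) \subseteq [0,1)$, since $q_1 q_2 \cdots q_N \to \infty$, I can choose $N$ so large that $1/(q_1 q_2 \cdots q_N) < b - a$. The $Q$-cylinder sets of rank $N$ partition $[0,1)$ into half-open intervals of length exactly $1/(q_1 \cdots q_N)$, so at least one such cylinder $C_{F_1,\ldots,F_N}$ lies entirely inside $(a,b)$, where $F_j \in \{0,1,\ldots,q_j-1\}$.

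Finally, I would define
$$
y = \sum_{n=1}^{N} \frac{F_n}{q_1 q_2 \cdots q_n} + \sum_{n=N+1}^{\infty} \frac{E_n}{q_1 q_2 \cdots q_n}.
$$
Then $y \in C_{F_1,\ldots,F_N} \subseteq (a,b)$, and by the tail observation above, $y$ is $Q$-ratio normal. Since $(a,b)$ was arbitrary, the $Q$-ratio normal numbers are dense in $[0,1)$. The only subtlety worth double-checking is the tail argument: it is essential that each denominator $N_n^Q(B',y) \to \infty$ so that the $O(1)$ perturbation becomes negligible, and this is guaranteed by $Q$-ratio normality of $x$ together with $N^Q_n(B',y) = N^Q_n(B',x) + O(1)$.
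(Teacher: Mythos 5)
Your proof is correct and is essentially the argument the paper leaves implicit for this corollary: take the $Q$-ratio normal number furnished by \refp{existsrationormal} (for which every block occurs infinitely often, so the $O(1)$ perturbation from altering finitely many digits is harmless), and use that the rank-$N$ cylinders have length $1/(q_1\cdots q_N)\to 0$ to place such a number in any subinterval. Two trivial slips that do not affect the argument: changing the first $N$ digits can affect up to $N$ (not $2k$) occurrences of a block, and you should choose $N$ with $1/(q_1\cdots q_N)<(b-a)/2$, since an interval only slightly longer than the cylinder length need not contain a whole cylinder.
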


\end{document}